\newcommand{\abs}[1]{\mathopen\lvert#1\mathclose\rvert}
\newcommand{\bigabs}[1]{\bigl\lvert#1\bigr\rvert}
\newcommand{\biggabs}[1]{\biggl\lvert#1\biggr\rvert}
\newcommand{\norm}[1]{\mathopen\lVert#1\mathclose\rVert}
\newcommand{\bignorm}[1]{\mathopen\big\lVert#1\mathclose\big\rVert}
\newcommand{\floor}[1]{\lfloor#1\rfloor}
\newcommand{\N}{{\mathbb N}}
\newcommand{\R}{{\mathbb R}}
\renewcommand{\S}{{\mathbb S}}
\DeclareMathOperator{\Lip}{Lip}
\DeclareMathOperator{\dist}{dist}
\DeclareMathOperator{\Int}{int}
\newcommand{\dif}{\,\mathrm{d}}
\theoremstyle{plain}
\newtheorem{proposition}{Proposition}[section]
\newtheorem{lemma}[proposition]{Lemma}
\newtheorem{theorem}{Theorem}
\newtheoremstyle{addendumstyle}{\topsep}{\topsep}{\itshape}{}{\bfseries}{.}{.5em plus 1pt minus 1pt}{#1 #2 to #3}
\theoremstyle{addendumstyle}
\theoremstyle{definition}
\theoremstyle{remark}
\newtheorem*{Claim}{Claim}
\newcounter{cte}
\newcommand{\Constant}{\refstepcounter{cte} C_{\thecte}}
\newcommand{\NewConstant}{\setcounter{cte}{1} C_{\thecte}}
\newcommand{\SameConstant}{C_{\thecte}}
\numberwithin{equation}{section}
\title{Strong approximation of fractional Sobolev maps}
\author{Pierre Bousquet}
\address{
Aix-Marseille Universit\'e\\
Laboratoire d'analyse, topologie, probabilit\'es UMR7353\\
39 rue Fr\'ed\'eric Joliot Curie\\
13453 Marseille Cedex 13\\
France}
\email{Pierre.Bousquet@univ-amu.fr}
\author{Augusto C. Ponce}
\address{
 Université catholique de Louvain\\
 Institut de Recherche en Math{\'e}matique et Physique\\
 Chemin du cyclotron 2, bte L7.01.02\\
1348 Louvain-la-Neuve\\
Belgium}
\email{Augusto.Ponce@uclouvain.be}
\author{Jean Van Schaftingen}
\address{
 Université catholique de Louvain\\
 Institut de Recherche en Math{\'e}matique et Physique\\
 Chemin du cyclotron 2, bte L7.01.02\\
1348 Louvain-la-Neuve\\
Belgium}
\email{Jean.VanSchaftingen@uclouvain.be}
\begin{document}

\begin{abstract}
Brezis and Mironescu have announced several years ago that for a compact manifold \(N^n \subset \R^\nu\) and for real numbers \(0 < s < 1\) and \(1 \le p < \infty\), the class \(C^\infty(\overline{Q}^m; N^n)\) of smooth maps on the cube with values into \(N^n\) is dense with respect to the strong topology in the Sobolev space \(W^{s, p}(Q^m; N^n)\) when the homotopy group \(\pi_{\lfloor sp \rfloor}(N^n)\) of order \(\lfloor sp \rfloor\) is trivial.
The proof of this beautiful result is long and rather involved.
Under the additional assumption that \(N^n\) is \(\floor{sp}\) simply connected, we give a shorter proof of their result. 
Our proof for \(sp \ge 1\) is based on the existence of a retraction of \(\R^\nu\) onto \(N^n\) except for a small subset in the complement of \(N^n\) and on the Gagliardo-Nirenberg interpolation inequality for maps in \(W^{1, q} \cap L^\infty\).
In contrast, the case \(sp < 1\) relies on the density of step functions on cubes in \(W^{s, p}\).
\end{abstract}

\subjclass[2010]{58D15 (46E35, 46T20)}

\keywords{Strong density; Sobolev maps; fractional Sobolev spaces; simply connectedness}

\maketitle
%\tableofcontents

\section{Introduction}

We address in this paper the problem of density of smooth maps in the fractional Sobolev spaces \(W^{s, p}\) with values into manifolds.
More precisely, let \(0 < s < 1\) and \(1 \le p < +\infty\), and let \(N^n\) be a compact manifold of dimension \(n\) imbedded in the Euclidean space \(\R^\nu\).
The class of Sobolev maps \(W^{s, p}(Q^m; N^n)\) on the unit \(m\) dimensional cube \(Q^m\) with values into \(N^n\) is defined as the set of measurable maps \(u : Q^m \to \R^\nu\) such that
\[
u(x) \in N^n \quad \text{for a.e.\@ \(x \in Q^m\)}
\]
having finite Gagliardo seminorm~\cite{Gagliardo},
\[
[u]_{W^{s, p}(Q^m)} 
= \bigg( \int\limits_{Q^m}\int\limits_{Q^m} \frac{|u(x)- u(y)|^p}{|x-y|^{m+s p}} \dif x \dif y \bigg)^{1/p}.
\]
The following question arises naturally: does \(W^{s, p}(Q^m; N^n)\) coincide with the closure of smooth maps $C^{\infty}(\overline Q^m; N^n)$ with respect to the distance given by
\[
d_{s, p}(u, v)
= \norm{u - v}_{L^p(Q^m)}  + [u - v]_{W^{s, p}(Q^m)}?
\]

This is indeed the case when \(sp \ge m\):

\begin{proposition} \label{premiertheorem}
If $sp\geq m$, then the family of smooth maps $C^{\infty}(\overline Q^m; N^n)$ is strongly dense in $W^{s,p}(Q^m; N^n)$.
\end{proposition}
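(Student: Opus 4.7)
The plan is to combine mollification of $u$ with a smooth retraction onto $N^n$. First I would extend $u$ to a Sobolev map on a neighborhood of $\overline Q^m$ (for example by reflection across the faces) and set $u_{\eps} = \rho_{\eps} \ast u$ for a standard family of smooth mollifiers $\rho_{\eps}$. The maps $u_{\eps}$ belong to $C^{\infty}(\overline Q^m; \R^{\nu})$ and converge strongly to $u$ in $W^{s,p}(Q^m)$ as $\eps \to 0$.

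The crucial step is to show that for $\eps$ small enough, $u_{\eps}(\overline Q^m)$ lies in a fixed tubular neighborhood $\cU$ of $N^n$ on which the nearest-point projection $\Pi \colon \cU \to N^n$ is smooth. When $sp > m$, the Morrey embedding places $u$ in $C^{0, s - m/p}(\overline Q^m; N^n)$, whence $u_{\eps} \to u$ uniformly on $\overline Q^m$ and $\dist(u_{\eps}(x), N^n) \to 0$ uniformly in $x$. When $sp = m$, the map $u$ need not be continuous, but it belongs to $\mathrm{VMO}(Q^m)$; since $u_{\eps}(x)$ is a smoothly weighted average of $u$ over a ball of radius $\eps$ around $x$, the Brezis-Nirenberg observation that the distance from such an average to $N^n$ tends to $0$ uniformly in $x$ as $\eps \to 0$ yields the same conclusion.

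With $u_{\eps}(\overline Q^m) \subset \cU$, the composition $\Pi \circ u_{\eps}$ lies in $C^{\infty}(\overline Q^m; N^n)$, and it remains to show that $\Pi \circ u_{\eps} \to u$ in $W^{s,p}$. The $L^p$ convergence follows from the Lipschitz bound on $\Pi$ over $\cU$ together with the identity $u = \Pi \circ u$. For the Gagliardo seminorm, expanding $\Pi(u_{\eps}(x)) - \Pi(u_{\eps}(y)) - u(x) + u(y)$ via the fundamental theorem of calculus and invoking uniform bounds on $\Pi$ and $D\Pi$ over $\cU$ yields a pointwise estimate of the form $C\abs{(u_{\eps} - u)(x) - (u_{\eps} - u)(y)} + C\abs{u(x) - u(y)}\abs{u_{\eps}(y) - u(y)}$; the double integral of the first term is bounded by $C^p [u_{\eps} - u]_{W^{s,p}}^p \to 0$, while that of the second vanishes by dominated convergence against an integrable majorant proportional to $\abs{u(x) - u(y)}^p / \abs{x - y}^{m + sp}$. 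The main obstacle is the borderline case $sp = m$, where continuity of $u$ is lost and one must rely on the VMO structure to keep the mollified maps inside a tubular neighborhood of $N^n$.
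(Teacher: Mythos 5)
Your proposal follows essentially the same route as the paper's sketch of Proposition~\ref{premiertheorem}: mollify $u$, observe via the Morrey or VMO embedding (depending on whether $sp>m$ or $sp=m$) that $\rho_\eps*u$ eventually takes values in a tubular neighborhood of $N^n$, and project by the nearest-point retraction. Your direct estimate of the Gagliardo seminorm of $\Pi\circ u_\eps-u$ is a correct, self-contained substitute for invoking the continuity of the superposition operator proved in Lemma~\ref{lemmaContinuityLipschitz}.
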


Here is the sketch of the argument:
given \(u \in W^{s, p}(Q^m; N^n)\), we consider the convolution \(\varphi_\epsilon \ast u\) with a smooth kernel \(\varphi_\epsilon\). 
If the range of \(\varphi_\epsilon \ast u\) lies in a small tubular neighborhood of \(N^n\), then we may project \(\varphi_\epsilon \ast u\) pointwisely into \(N^n\). 
We can always do this for \(\epsilon > 0\) sufficiently small as long as \(sp \ge m\).
Indeed, in this case \(W^{s, p} (Q^m;\R^\nu)\) imbeds into the space of functions of vanishing mean oscillation \(\mathrm{VMO} (Q^m;\R^\nu)\), whence \(\dist{(\varphi_\epsilon \ast u, N^n)}\) converges uniformly to \(0\) \cite{Brezis-Nirenberg_1995}*{Eq.~(7)}.

The counterpart of Proposition~\ref{premiertheorem} for \(W^{1, p}(Q^m; N^n)\) and \(p \ge m\) is due to Schoen and Uhlenbeck~\cite{Schoen-Uhlenbeck}.
The role played by \(\mathrm{VMO}\) functions in this problem has been first observed by Brezis and Nirenberg~\cite{Brezis-Nirenberg_1995}.

In the subtler case \(sp < m\), the answer to the density problem only depends on the topology of the manifold \(N^n\):

\begin{theorem}\label{theoremBrezisMironescu}
If \(sp < m\), then \(C^\infty(\overline Q^m; N^n)\) is strongly dense in \(W^{s, p}(Q^m; N^n)\) if and only if \(\pi_{\floor{sp}}(N^n) \simeq \{0\}\).
\end{theorem}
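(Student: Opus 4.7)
The plan is to treat the two implications separately. For the necessity, if $\pi_{\floor{sp}}(N^n)$ is non-trivial, I would exhibit a singular map that cannot be strongly approximated by smooth maps. Given a continuous non-contractible $\omega : \S^{\floor{sp}} \to N^n$, the function $u(x) = \omega(x'/\abs{x'})$, where $x' = (x_1, \dotsc, x_{\floor{sp}+1})$ is the projection onto the first $\floor{sp}+1$ coordinates, lies in $W^{s,p}(Q^m; N^n)$ precisely because $sp < \floor{sp}+1$. A topological invariant, obtained for instance by restricting the map to small $\floor{sp}$-spheres transverse to the singular set and taking homotopy classes, is continuous under $W^{s,p}$ convergence but vanishes for continuous maps from $Q^m$ to $N^n$; this excludes any smooth approximation of $u$.

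For the sufficiency, I would distinguish the regimes $sp < 1$ and $sp \geq 1$, since the abstract indicates that they require different ideas. In the range $sp < 1$, the hypothesis reduces to $N^n$ being path-connected. My plan would be to use density of step functions on a dyadic cube decomposition in $W^{s,p}(Q^m; \R^\nu)$, a density statement available precisely because $sp < 1$ allows jump discontinuities. Given $u \in W^{s,p}(Q^m; N^n)$, I would approximate it by a step function whose values on each small cube are chosen directly from $N^n$ (for instance, the value of $u$ at a well-chosen Lebesgue point inside the cube). The resulting map is not smooth across cube faces, but path-connectedness of $N^n$ lets one interpolate between adjacent values by short smooth paths supported in thin strips around the faces; the contribution of these strips to the Gagliardo seminorm is controlled thanks to $sp < 1$.

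In the range $sp \geq 1$, the strategy is a three-step smoothing--retraction--correction scheme. First, mollify $u$ to obtain a smooth $v_\epsilon = \varphi_\epsilon \ast u$ with values in $\R^\nu$. Second, rely on the existence of a smooth retraction $\Pi : \R^\nu \setminus K \to N^n$ defined off a low-dimensional ``bad set'' $K$ of codimension larger than $sp$, and set $u_\epsilon = \Pi \circ v_\epsilon$ on the region where $v_\epsilon$ avoids $K$; the Gagliardo--Nirenberg interpolation for $W^{1,q} \cap L^\infty$ is the natural tool to bound the fractional seminorm of $u_\epsilon$ in terms of the gradient of $v_\epsilon$. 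Third, on the complementary ``singular'' region, where $v_\epsilon$ meets $K$, replace $u_\epsilon$ by a smooth extension into $N^n$ that agrees with it on the boundary of each singular patch, which is possible under the $\floor{sp}$-simple connectedness assumption.

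The main obstacle will be this last step. By transversality, the singular set $v_\epsilon^{-1}(K)$ has codimension $\floor{sp} + 1$ in $Q^m$, so the boundary data to be filled effectively carries $\floor{sp}$-dimensional topology. Its continuous extension into $N^n$ requires the vanishing of $\pi_j(N^n)$ for all $j \leq \floor{sp}$, which is precisely the strengthened hypothesis assumed in the paper (and is more than what Brezis--Mironescu require). Beyond the existence of an extension, one must fill in quantitatively: the correction must be Lipschitz with controlled constant on patches of a carefully chosen scale, so that the $W^{s,p}$ cost is dominated by the previous errors. Balancing the mollification parameter $\epsilon$, the size of $K$, and the scale of the correction patches through a pigeonhole argument is the technically delicate point of the whole construction.
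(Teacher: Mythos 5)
Something to flag first: the paper does \emph{not} give a self-contained proof of Theorem~\ref{theoremBrezisMironescu}. The necessity direction is quoted from the literature (Escobedo, Schoen--Uhlenbeck, Mironescu), and for the sufficiency the paper only proves Theorem~\ref{deuxiemetheorem}, which assumes $\pi_\ell(N^n)\simeq\{0\}$ for \emph{every} $\ell\le\floor{sp}$. You yourself note that your construction forces this strengthened hypothesis, so what you are sketching is really a proof of Theorem~\ref{deuxiemetheorem}, not of Theorem~\ref{theoremBrezisMironescu} proper. Your necessity argument (the $0$-homogeneous singular map $\omega(x'/\abs{x'})$ and stability of a homotopy invariant) is the standard one, consistent with what the paper cites.

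Where the comparison with the paper's proof of the weaker Theorem~\ref{deuxiemetheorem} is meaningful, the broad strategy overlaps but the decisive steps diverge. For $sp\ge 1$, your first two moves --- mollify, then compose with a Hardt--Lin retraction defined off a codimension-$(\floor{sp}+1)$ bad set, with genericity via transversality --- match the start of Proposition~\ref{densityrwsp}. Your third step, however, is not the paper's: you propose to \emph{fill in} the singular patches $v_\epsilon^{-1}(K)$ by quantitatively controlled Lipschitz extensions, tuned by a pigeonhole over the patch scale; this is close to Bethuel's original $W^{1,p}$ scheme. The paper instead observes that $\kappa_\xi\circ(\varphi_t\ast u)$ already belongs to $\mathcal{R}_{m-\floor{sp}-1}(Q^m;N^n)\subset W^{1,q}(Q^m;N^n)$ for every $q<\floor{sp}+1$, then invokes the known $W^{1,q}$-density of smooth maps into an $\floor{sp}$-simply connected target and upgrades to $W^{s,p}$ by Gagliardo--Nirenberg interpolation. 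No singular patches need filling, and the role of your pigeonhole is taken over by a Fubini averaging over the \emph{translation parameter} $\xi$ of the bad set $X$, which is the technical core of the paper's proof and is absent from your sketch. For $sp<1$, your proposal (sample at Lebesgue points, interpolate across cube faces by smooth paths supported in thin strips) is also different: the paper uses Haar averages, for which the $W^{s,p}$-convergence is proved in Proposition~\ref{propositionWspConvergenceStepFunction} (Lebesgue-point sampling carries no such guarantee), replaces far-from-$N^n$ values by a base point, projects to $N^n$, and then exploits that a step function has \emph{finite} image, which by Proposition~\ref{propositionFiniteSet} sits in a chart diffeomorphic to a ball where smoothing by convolution is immediate. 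This chart reduction removes the need for the thin-strip cost estimate you would otherwise have to carry out, which is not obviously controlled by $sp<1$ alone.
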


We denote by \(\floor{sp}\) the integral part of \(sp\) and for every \(\ell \in \N\), \(\pi_{\ell}(N^n)\) is the \(\ell\)th homotopy group of \(N^n\). 
The topological assumption \(\pi_{\floor{sp}}(N^n) \simeq \{0\}\) means that every continuous map \(f : \S^{\floor{sp}} \to N^n\) on the \(\floor{sp}\) dimensional sphere is homotopic to a constant map.
The necessity of this condition has been known for some time \citelist{\cite{Escobedo}*{Theorem~3} \cite{Schoen-Uhlenbeck}*{Section~4, Example}\cite{Mironescu}*{Theorem~4.4}}.

Brezis and Mironescu  have announced this beautiful result in a personal communication  in April 2003 and a sketch of the proof can be found for instance in \cite{Mironescu_2004}*{pp.~205--206}.
The analog of Theorem~\ref{theoremBrezisMironescu} for \(W^{1, p}\) Sobolev maps had been obtained by Bethuel in his seminal paper~\cite{Bethuel} (see also \cite{Hang-Lin}).
Partial results for fractional Sobolev exponents \(s\) were known when the manifold \(N^n\) is a sphere with dimension \(n \ge sp\) \cite{Escobedo} and also in the setting of trace spaces with \(s = 1 - \frac{1}{p}\) \citelist{\cite{Bethuel-1995} \cite{Mucci}}.

The proof of Theorem~\ref{theoremBrezisMironescu} is long and quite involved.
In this paper we prove the reverse implication of Theorem~\ref{theoremBrezisMironescu} in the case of  \(\floor{sp}\) simply connected manifolds \(N^n\).
Under this assumption, we give a shorter argument which leads to the following:

\begin{theorem}\label{deuxiemetheorem}
If \(sp < m\) and if  for every \(\ell \in \{0, \dots, \floor{sp}\}\),
\begin{equation*}
\pi_{\ell}(N^n) \simeq \{0\},
\end{equation*}
then \(C^\infty(\overline Q^m; N^n)\) is strongly dense in \(W^{s, p}(Q^m; N^n)\).
\end{theorem}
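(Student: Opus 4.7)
The plan is to split the proof along the dividing line $sp = 1$ suggested by the abstract.

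\textbf{Regime $sp < 1$.} Here $\floor{sp} = 0$ and the hypothesis reduces to $N^n$ being path-connected. I would approximate $u$ by functions that are piecewise constant with values in $N^n$ on a dyadic cubication of $Q^m$ of mesh size $\eta$. On each small cube $Q$, a Fubini argument over the position of a base point lets me select a value $u(x_Q) \in N^n$ for which the resulting step function $\tilde u_\eta$ is close to $u$ both in $L^p(Q^m)$ and in $W^{s,p}$-seminorm; the restriction $sp<1$ is used crucially here, since a jump across a codimension-one face yields a finite $W^{s,p}$-seminorm exactly in this range. Using a jointly continuous family of short paths in $N^n$ between nearby points (provided by the compactness and connectedness of $N^n$), I would then smoothly interpolate $\tilde u_\eta$ across the $(m-1)$-faces of the cubication inside a thin corridor, obtaining a smooth $N^n$-valued map whose $W^{s,p}$-distance to $u$ tends to zero as $\eta \to 0$.

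\textbf{Regime $sp \ge 1$.} The strategy is to approximate $u$ by smooth $\R^\nu$-valued maps and then to project the approximations into $N^n$. I start from the mollification $u_\epsilon = \varphi_\epsilon * u$ and invoke the Gagliardo-Nirenberg interpolation inequality
\[
[v]_{W^{s,p}(Q^m)} \le C\,\norm{v}_{L^\infty(Q^m)}^{1-s}\,\norm{\nabla v}_{L^{sp}(Q^m)}^{s},
\]
which converts control in $W^{1,sp} \cap L^\infty$ into control in $W^{s,p}$. Since $u$ takes values in the compact manifold $N^n$ the $L^\infty$ factor is automatic, and the task reduces to bounding $\nabla$ of the projected approximation in $L^{sp}$. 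To perform the projection, I construct a retraction $\Pi \colon \R^\nu \setminus X \to N^n$, where $X \subset \R^\nu \setminus N^n$ is a finite union of smooth submanifolds of codimension at least $\floor{sp} + 1$: starting from the nearest-point projection on a small tubular neighborhood of $N^n$ and proceeding cell by cell along a cubication of the complement, the obstruction to extending from the $k$-skeleton to the $(k+1)$-skeleton lies in $\pi_k(N^n)$ and vanishes for $0 \le k \le \floor{sp}$ by hypothesis. A Fubini argument over translations $X \mapsto X + a$ then yields a generic $a$ for which the preimage $u_\epsilon^{-1}(X + a)$ is quantitatively small, and $\Pi(\cdot + a) \circ u_\epsilon$ becomes an almost-everywhere defined smooth $N^n$-valued approximation.

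\textbf{Main obstacle.} The hardest step is the quantitative control of $\Pi \circ u_\epsilon$: the differential $|D\Pi|$ behaves like $\dist(\cdot, X)^{-1}$ near $X$, and one must show, after averaging over translates of $X$, that $\norm{\nabla(\Pi(\cdot + a) \circ u_\epsilon)}_{L^{sp}}$ stays controlled in terms of $\norm{\nabla u_\epsilon}_{L^{sp}}$. This is exactly where the codimension bound on $X$, and hence the $\floor{sp}$-simple-connectedness hypothesis, is used in its sharpest form: the weight $\dist(\cdot, X)^{-sp}$ is locally integrable precisely when $sp < \mathrm{codim}\,X$. Once this estimate is in place, the Gagliardo-Nirenberg interpolation together with compactness of $N^n$ delivers a smooth $N^n$-valued approximation of $u$ in the $W^{s,p}$-distance, completing the reduction.
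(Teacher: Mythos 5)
Your route matches the paper's up to the projection step, but $\Pi(\cdot+a)\circ u_\epsilon$ is \emph{not} smooth: $\Pi$ is merely locally Lipschitz with $\abs{D\Pi}\sim\dist(\cdot,X)^{-1}$, so the composition is singular on $u_\epsilon^{-1}(X+a)$. What you obtain is a map that is smooth outside a finite union of submanifolds of dimension $m-\floor{sp}-1$, i.e.\ an element of the class the paper calls $\mathcal{R}_{m-\floor{sp}-1}(Q^m;N^n)$, not an element of $C^\infty(\overline{Q}^m;N^n)$. A separate mechanism is needed to remove the singularities, and this is where the remaining part of the topological hypothesis enters: the paper observes that $\mathcal{R}_{m-\floor{sp}-1}\subset W^{1,q}(Q^m;N^n)$ for every $q<\floor{sp}+1$, invokes the $W^{1,q}$ density of smooth maps for $\floor{sp}$-simply connected targets (Haj\l asz), and then performs a second Gagliardo--Nirenberg interpolation to convert $W^{1,q}$ convergence into $W^{s,p}$ convergence. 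The retraction step only uses $\pi_\ell(N^n)=\{0\}$ for $\ell\le\floor{sp}-1$; it is the removal of singularities that consumes $\pi_{\floor{sp}}(N^n)=\{0\}$. Without that step your argument establishes only the density of $\mathcal{R}_{m-\floor{sp}-1}$, which is Proposition~\ref{densityrwsp}, not the theorem. (Also, the interpolation $[v]_{W^{s,p}}\lesssim\norm{v}_{L^\infty}^{1-s}\norm{\nabla v}_{L^{sp}}^{s}$ degenerates at $sp=1$ since $W^{1,1}\cap L^\infty$ does not imbed into $W^{s,1/s}$; the paper fixes this by taking $q$ strictly between $1$ and $2$ at that endpoint.)

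\textbf{When $sp<1$.} Here only $\pi_0(N^n)=\{0\}$ is assumed, and the path-interpolation in corridors around the $(m-1)$-faces does not close up at the lower strata of the cubication: around an $(m-2)$-face, the concatenation of the interpolating paths between the four adjacent constant values forms a loop in $N^n$ that need not be null-homotopic, and compactness alone gives neither ``short'' nor ``jointly continuous'' families of paths when adjacent cubes carry values far apart in $N^n$. The paper avoids the issue entirely via a differential-topological fact (Proposition~\ref{propositionFiniteSet}): a finite subset of a connected manifold lies inside an open chart diffeomorphic to the Euclidean ball $B^n$. Composing the $N^n$-valued step function with that chart renders the target convex, so that ordinary mollification produces the smooth approximant, with no obstruction to fill. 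This device is the essential ingredient missing from your low-regime argument; the sampling-and-Fubini construction of the step function itself is plausible (the paper uses Haar averages instead, via Propositions~\ref{propositionEstimateELp} and~\ref{propositionWspConvergenceStepFunction}), but it is the passage from a step function to a smooth map that requires more than connectedness of $N^n$ without this chart.
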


%Under this assumption, we give a shorter argument leads to the following: 
This condition has been used by Haj\l asz \cite{Hajlasz} to give a simpler proof of Bethuel's density result for \(W^{1, p}\).
In \cite{Bousquet-Ponce-VanSchaftingen-2013}, we explain how Haj\l asz's strategy can be implemented for every Sobolev exponent \(s \ge 1\) using some pointwise estimates involving the maximal function operator inspired from the work of Maz'ya and Shaposhnikova~\cite{Mazya-Shaposhnikova}.

In order to treat the case \(s < 1\), we introduce here an additional ingredient based on the density of maps which are smooth except for a small set.
The case \(sp \ge 1\) is covered by Proposition~\ref{densityrwsp} below which relies on a projection argument due to Hardt and Lin~\cite{Hardt-Lin-1987} (Lemma~\ref{lemmaprojectionhardtlin} below) and on analytical estimates by Bourgain, Brezis and Mironescu~\cite{Bourgain-Brezis-Mironescu-2004}.
The case \(sp < 1\) is based on the density of step functions on cubes (Proposition~\ref{propositionWspConvergenceStepFunction} below) inspired by the works of Escobedo~\cite{Escobedo} and Bourgain, Brezis and Mironescu~\cite{Bourgain-Brezis-Mironescu-2000}.

%%%%%%%%%%%%%%%%%%%%%%%%%%%%%%%%%%%%%%%%%%%%%%%%%%%%%%%
%%%%%%%%%%%%%%%%%%%%%%%%%%%%%%%%%%%%%%%%%%%%%%%%%%%%%%%
%%%%%%%%%%%%%%%%%%%%%%%%%%%%%%%%%%%%%%%%%%%%%%%%%%%%%%%
%%%%%%%%%%%%%%%%%%%%%%%%%%%%%%%%%%%%%%%%%%%%%%%%%%%%%%%

\section{Strong density for \boldmath$sp \ge 1$}

The proof of Theorem~\ref{deuxiemetheorem} for \(sp \ge 1\) is based on two main ingredients: (1) when the manifold \(N^n\) is \(\floor{sp}\) simply connected, smooth maps are strongly dense in \(W^{1, q}(Q^m; N^n)\)  
for every \(1 \le q < \floor{sp} + 1\) and (2)
locally Lipschitz continuous maps outside a set of dimension \(m - \floor{sp} - 1\) are dense in \(W^{s, p}(Q^m; N^n)\).

The proof of the first assertion can be found in \cites{Hajlasz,Bousquet-Ponce-VanSchaftingen-2013}.
Before giving the precise statement of the second assertion,
we introduce for \(j \in \{0, \dots, m-2\}\) the class \(\mathcal{R}_j (Q^m; N^n)\) of maps \(u : \overline Q^m \to N^n\) such that
\begin{enumerate}[\((i)\)]
\item there exists a finite union of \(j\) dimensional submanifolds \(T \subset \R^m\) such that \(u\) is locally Lipschitz continuous in \(\overline Q^m \setminus T\),
\item for almost every \(x \in \overline Q^m \setminus T\),
\[
\abs{D u(x)}\leq \frac{C}{\dist{(x, T)}},
\]
for some constant \(C>0\) depending on \(u\).
\end{enumerate}
We observe that for every \(1 \le q < m-j\), \(\mathcal{R}_j (Q^m; N^n) \subset W^{1, q}(Q^m; N^n)\), whence by the Gagliardo-Nirenberg interpolation inequality~
\citelist{\cite{Brezis-Mironescu-2001} \cite{Mazya-Shaposhnikova-2002}*{Remark~1}}, for every \(0 < s < 1\),
\[
\mathcal{R}_j (Q^m; N^n) \subset W^{s, \frac{q}{s}}(Q^m; N^n).
\]
In particular, \(\mathcal{R}_{m - \floor{sp} - 1}(Q^m; N^n)\) is a subset of \(W^{s, p}(Q^m; N^n)\). 

Assertion \((2)\) above can be stated as follows:

\begin{proposition}\label{densityrwsp}
If \(1 \le sp < m\) and \(N^n\) is \(\floor{sp} - 1\) simply connected, then the class \(\mathcal{R}_{m - \floor{sp} - 1}(Q^m; N^n)\) is strongly dense in  \(W^{s, p}(Q^m; N^n)\).
\end{proposition}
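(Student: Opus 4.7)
The plan is to compose a mollification of \(u\) with a retraction of \(\R^\nu\) onto \(N^n\) that is defined away from a small exceptional set, as announced in the abstract. Set \(k := \floor{sp}\). First, I would invoke the Hardt-Lin type construction of Lemma~\ref{lemmaprojectionhardtlin}: the hypothesis \(\pi_\ell(N^n) \simeq \{0\}\) for \(\ell = 0, \dots, k-1\) yields a closed set \(F \subset \R^\nu \setminus N^n\) contained in a finite union of smooth submanifolds of dimension \(\nu - k - 1\), together with a locally Lipschitz retraction \(\Pi : \R^\nu \setminus F \to N^n\) satisfying the pointwise gradient bound
\begin{equation*}
\abs{D\Pi(y)} \le \frac{C}{\dist(y, F)} \quad \text{for a.e.\@ } y \in \R^\nu \setminus F.
\end{equation*}
The topological assumption is used precisely here, to force \(F\) to have codimension \(k + 1\) in \(\R^\nu\).

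Second, given \(u \in W^{s,p}(Q^m; N^n)\), I extend it past \(\partial Q^m\) and mollify to \(u_\eta := \varphi_\eta \ast u\), a smooth map bounded uniformly in \(\eta\) by compactness of \(N^n\) and converging to \(u\) in \(W^{s,p}\). For small \(\xi \in \R^\nu\), set \(v_{\eta, \xi}(x) := \Pi(u_\eta(x) - \xi)\) and \(T_{\eta, \xi} := u_\eta^{-1}(F + \xi)\). By Sard's theorem applied to the smooth map \(u_\eta\), for almost every \(\xi\) the set \(T_{\eta, \xi}\) is contained in a finite union of \((m - k - 1)\)-dimensional smooth submanifolds; a transversality argument then gives the lower bound \(\dist(u_\eta(x), F + \xi) \ge c\, \dist(x, T_{\eta, \xi})\) near \(T_{\eta, \xi}\), so that the chain rule yields \(\abs{D v_{\eta, \xi}(x)} \le C/\dist(x, T_{\eta, \xi})\). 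Hence \(v_{\eta, \xi} \in \mathcal{R}_{m - k - 1}(Q^m; N^n)\).

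Third, to establish \(v_{\eta, \xi} \to u\) in \(W^{s,p}\) for suitable \(\xi = \xi(\eta) \to 0\), I split \(v_{\eta, \xi} - u = (v_{\eta, \xi} - u_\eta) + (u_\eta - u)\). The second summand vanishes in \(W^{s,p}\) by standard mollification. For the first, I apply the Gagliardo-Nirenberg interpolation
\begin{equation*}
[w]_{W^{s,p}} \le C \norm{w}_{L^\infty}^{1-s} [w]_{W^{1, sp}}^{s}
\end{equation*}
to \(w := v_{\eta, \xi} - u_\eta\); the \(L^\infty\)-factor is uniformly bounded. For the \(W^{1, sp}\)-seminorm I average in \(\xi\) over a small ball in \(\R^\nu\): by the Bourgain-Brezis-Mironescu~\cite{Bourgain-Brezis-Mironescu-2004} estimates combined with the gradient bound on \(\Pi\), the \(\xi\)-average of \([v_{\eta, \xi} - u_\eta]_{W^{1, sp}}^{sp}\) is controlled by a multiple of \([u_\eta]_{W^{s,p}}^{p}\), and a pigeonhole argument over \(\xi\) furnishes the required decay.

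The main difficulty lies in this last averaging step. One must control the integral
\begin{equation*}
\int_{Q^m} \abs{D\Pi(u_\eta(x) - \xi)}^{sp} \abs{D u_\eta(x)}^{sp} \dif x
\end{equation*}
by an averaged quantity comparable to \([u_\eta]_{W^{s,p}}^{p}\). The integrand is locally integrable for generic \(\xi\) precisely because \(\dim F \le \nu - k - 1\) and \(sp < k + 1\), but passing from pointwise integrability to the desired uniform averaged bound in \(\xi\) requires the full strength of the Bourgain-Brezis-Mironescu estimates and is the analytic heart of the argument.
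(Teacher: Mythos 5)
Your first two steps (the Hardt--Lin retraction and the Sard/transversality argument) match the paper's. The problem is in the third step, and it is a genuine gap, not a detail. You decompose \(v_{\eta,\xi}-u = (v_{\eta,\xi}-u_\eta)+(u_\eta-u)\) and try to kill the first piece by Gagliardo--Nirenberg, bounding \([v_{\eta,\xi}-u_\eta]_{W^{s,p}}\) by \(\norm{v_{\eta,\xi}-u_\eta}_{L^\infty}^{1-s}\,[v_{\eta,\xi}-u_\eta]_{W^{1,sp}}^{s}\). Neither factor tends to zero. The \(L^\infty\) factor is merely bounded: on the set where \(u_\eta\) is far from \(N^n\), \(\abs{\Pi(u_\eta-\xi)-u_\eta}\) is of order \(\diam N^n\), not small. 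As for the \(W^{1,sp}\) factor, after the chain rule you are led to control \(\int_{Q^m}\abs{Du_\eta}^{sp}\), and this can blow up as \(\eta\to 0\) because a general \(u\in W^{s,p}\) need not lie in \(W^{1,sp}\); your claim that the \(\xi\)-average is ``controlled by a multiple of \([u_\eta]_{W^{s,p}}^p\)'' is not justified and is, in general, false. Even if it held, ``controlled by a bounded quantity'' is not decay, so the pigeonhole gives nothing.

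What your proposal is missing is the localization that makes the averaging argument actually produce decay. The paper splits the retraction \(\kappa=\Bar{\kappa}_\xi+\underline{\kappa}_\xi\) with a cutoff \(\theta\) supported near the singular set \(X\). The regular part \(\Bar{\kappa}_\xi\) is globally Lipschitz, so \(\norm{\Bar{\kappa}_\xi\circ(\varphi_t*u)-\Bar{\kappa}_\xi\circ u}_{W^{s,p}}\to 0\) follows from the continuity of the composition operator (Lemma~\ref{lemmaContinuityLipschitz}); this is where the ``easy'' part of the error goes. Only the singular part \(\underline{\kappa}_\xi\circ(\varphi_t*u)\) is attacked by Gagliardo--Nirenberg, and it is crucially supported in the small set \(\{\abs{\varphi_t*u-u}\ge\alpha\}\). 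That support is what turns the interpolation estimate into a shrinking quantity: the \(L^r\) factor (with \(r<\infty\), not \(L^\infty\)) picks up \(\abs{\{\abs{\varphi_t*u-u}\ge\alpha\}}^{1/r}\), the measure of this set decays like \(t^{sp}\) by Lemma~\ref{lemmaestimationconvolution} and Chebyshev, and this exactly cancels the blow-up \(t^{-(1-s)}\) in the gradient estimate, leaving \(\int_{\{\abs{\varphi_t*u-u}\ge\alpha\}}(D^{s,p}u)^p\to 0\) by dominated convergence. Also note that the exponent \(q\) in the \(W^{1,q}\)-factor must be chosen strictly between \(\max(1,sp)\) and \(\floor{sp}+1\): taking \(q=sp\) (your \(L^\infty\) endpoint) is precisely the borderline where Gagliardo--Nirenberg fails when \(sp=1\), as the paper observes. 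Without the cutoff and the localized \(L^r\) estimate, the argument does not close.
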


The proof of Theorem~\ref{theoremBrezisMironescu} by Brezis and Mironescu is based on the fact that \(\mathcal{R}_{m - \floor{sp} - 1}(Q^m; N^n)\) is strongly dense in  \(W^{s,p}(Q^m; N^n)\) for \emph{every} compact manifold \(N^n\).
This is also known to be the case for every \(s \in \N_*\) \cites{Bethuel,Bousquet-Ponce-VanSchaftingen}.
A previous density result of this type for \(\S^1\) valued maps in \(W^{\frac{1}{2}, 2}\) is due to Rivière~\cite{Riviere} (see also \cite{Bourgain-Brezis-Mironescu-2004}).

We temporarily assume Proposition~\ref{densityrwsp} and complete the proof of Theorem~\ref{deuxiemetheorem}:

\begin{proof}[Proof of Theorem~\ref{deuxiemetheorem} when \(sp \ge 1\)]
By Proposition~\ref{densityrwsp}, we only need to prove that any map \(u\in \mathcal{R}_{m - \floor{sp} - 1}(Q^m; N^n)\) can be  approximated in the \(W^{s,p}\) norm by smooth maps. 

Since \(u\in W^{1, q}(Q^m; N^n)\) for every \(1 \le q < \floor{sp} + 1\), by the topological assumption on the manifold \(N^n\) there exists a sequence of smooth maps converging to \(u\) in \(W^{1, q}(Q^m; N^n)\). 
When \(sp > 1\), we may take \(q = sp\) and by the Gagliardo-Nirenberg interpolation inequality \cite{Bourgain-Brezis-Mironescu-2000}*{Lemma~D.1} the same sequence converges to \(u\) in  \(W^{s, p}(Q^m; N^n)\). 
The Gagliardo-Nirenberg interpolation inequality fails for \(q = 1\) in the sense that \(W^{1, 1} \cap L^\infty\) is not continuously imbedded into \(W^{s, \frac{1}{s}}\).
When \(sp = 1\) we then take any fixed \(1 < q < 2\) and by the Gagliardo-Nirenberg interpolation inequality \(W^{s, p}\) is continuously imbedded in \(W^{1, q}\). 
This implies that the sequence converges to \(u\) in \(W^{s, p}(Q^m; N^n)\) as before. 
\end{proof}

We now turn ourselves to the proof of Proposition~\ref{densityrwsp}.
The main geometric ingredient asserts the existence of a retraction from a cube \(Q_R^\nu\) onto \(N^n\) except for a small set \cite{Hardt-Lin-1987}*{Lemma~6.1}:

\begin{lemma}\label{lemmaprojectionhardtlin}
Let \(\ell \in \{0, \dots, \nu - 2\}\).
If \(N^n\) is \(\ell\) simply connected and contained in a cube \(Q_R^\nu\) for some \(R > 0\), then
there exist a closed subset \(X\subset Q_R^\nu \setminus N^n\) contained in a finite union of \(\nu- \ell - 2\) dimensional planes and a locally Lipschitz retraction \(\kappa : Q_R^\nu\setminus X \to N^n\) such that for  \(x\in Q_R^\nu \setminus X\),
\begin{equation*}
\abs{D\kappa (x)} \leq \frac{C}{\dist{(x, X)}},
\end{equation*}
for some constant \(C > 0\) depending on \(\nu\) and \(N^n\).
\end{lemma}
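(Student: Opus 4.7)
The plan is to build $\kappa$ by induction over the skeleta of a sufficiently fine cubical subdivision $\mathcal{K}$ of $Q_R^\nu$. First, by the tubular neighborhood theorem and compactness of $N^n$, there exists $\delta > 0$ such that the nearest-point projection $\Pi : N_\delta \to N^n$ is a smooth retraction, where $N_\delta := \{y \in \R^\nu : \dist(y, N^n) < \delta\}$. On this neighborhood we already have a Lipschitz retraction. I would then fix $\mathcal{K}$ with cubes of side length small enough that every cube meeting $N^n$ is contained in $N_\delta$, and set $\kappa := \Pi$ on the union $U$ of such cubes. This automatically ensures $\kappa|_{N^n} = \Id$ and forces the singular set to be constructed below to lie in the complement of $N^n$.

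Next, I would extend $\kappa$ to the skeleta of $\mathcal{K}\setminus U$ by standard obstruction theory. Let $T^k$ denote the $k$-skeleton. Define $\kappa$ on vertices not in $U$ to be a fixed basepoint of $N^n$, and extend successively from $T^k$ to $T^{k+1}$ for $k = 0, \dots, \ell$. On each $(k+1)$-cell $\sigma$, the boundary datum is a continuous map $\partial\sigma \simeq \S^k \to N^n$ whose obstruction to extension lies in $\pi_k(N^n) \simeq \{0\}$. A continuous extension thus exists and may be made Lipschitz by mollification followed by post-composition with $\Pi$. This produces a locally Lipschitz map on $U \cup T^{\ell+1}$.

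For cells $\sigma^k$ with $k \ge \ell+2$ and $\sigma^k \not\subset U$, I would extend $\kappa$ by radial projection from a generic interior center: choose $a_\sigma \in \Int \sigma^k$ at positive distance from $\partial\sigma^k$, and for $x \in \sigma^k \setminus \{a_\sigma\}$ set $\kappa(x) := \kappa(p_\sigma(x))$, where $p_\sigma(x) \in \partial\sigma^k$ is the unique boundary point with $x \in [a_\sigma, p_\sigma(x)]$. The singular set in $\sigma^k$ is the cone with apex $a_\sigma$ over the singular set already present on $\partial\sigma^k$. An induction on $k$ starting from the base case $k = \ell+2$ (where the singular set is the single point $a_\sigma$) shows that the singular set in each $\sigma^k$ is contained in a finite union of affine subspaces of dimension $k - \ell - 2$. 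Collecting over top-dimensional cells yields the required closed set $X$ of dimension at most $\nu - \ell - 2$, disjoint from $N^n$ since each $\sigma^k$ involved is disjoint from $N^n$ by construction.

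Finally, the derivative bound follows from the chain rule $D\kappa(x) = D\kappa(p_\sigma(x))\cdot Dp_\sigma(x)$, the elementary radial estimate $|Dp_\sigma(x)| \le C |p_\sigma(x)-a_\sigma|/|x-a_\sigma|$, and the cone-geometry inequality $\dist(x, X) \ge c|x-a_\sigma|\,\dist(p_\sigma(x), X)/|p_\sigma(x)-a_\sigma|$, which together preserve the bound $|D\kappa(x)| \le C/\dist(x, X)$ through each coning step. The hardest part is the bookkeeping: the centers $a_\sigma$ must be chosen compatibly so that adjacent cones glue together coherently, the dimensional count must be tracked through every level of the iteration, and the Lipschitz constants coming from the obstruction-theoretic extensions and the successive conings must be controlled in terms of $\nu$ and $N^n$ alone.
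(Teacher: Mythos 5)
Your construction follows essentially the same route as the paper's (which is Hardt--Lin's Lemma~6.1): a Lipschitz retraction defined near \(N^n\), an obstruction-theoretic extension to the \((\ell+1)\)-skeleton using \(\pi_0 \simeq \dots \simeq \pi_\ell \simeq \{0\}\), and a radial/homogeneous extension to higher cells. The paper expresses the last step as a retraction \(f\) off the dual \((\nu-\ell-2)\)-skeleton \(L^{\nu-\ell-2}\) and takes \(\kappa = h \circ g \circ f\); your iterated coning with apices \(a_\sigma\) is exactly how such a dual-skeleton retraction is built, and your \(X\) is the cone locus (the dual skeleton outside the tubular neighborhood). So the structure is right.

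There is, however, a sign error in the key inequality that closes the derivative bound. You wrote \(\dist(x, X) \ge c\,|x - a_\sigma|\,\dist(p_\sigma(x), X)/|p_\sigma(x)-a_\sigma|\), but to deduce \(\abs{D\kappa(x)} \le C/\dist(x,X)\) from \(\abs{D\kappa(p_\sigma(x))} \le C/\dist(p_\sigma(x), X)\) and \(\abs{Dp_\sigma(x)} \le C\,|p_\sigma(x)-a_\sigma|/|x - a_\sigma|\), you need an inequality in the opposite direction, namely \(\dist(x, X) \le C\,|x - a_\sigma|\,\dist(p_\sigma(x), X)/|p_\sigma(x)-a_\sigma|\). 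That upper bound is not literally true with the \emph{global} set \(X\) on the right, because \(X\) in an adjacent cell can be much closer to \(p_\sigma(x)\) than the portion of \(X\) seen by the cone in \(\sigma\). The fix is to carry through the induction a \emph{local} estimate \(\abs{D\kappa} \le C_k/\dist(\cdot, X\cap\sigma)\) on each cell \(\sigma\): similar triangles give \(\dist(x, X\cap\sigma) \le |x-a_\sigma|\,\dist(p_\sigma(x), X\cap\partial\sigma)/|p_\sigma(x)-a_\sigma|\), the boundary datum satisfies \(\abs{D\kappa(p_\sigma(x))} \le C_k/\dist(p_\sigma(x), X\cap\partial\sigma)\) because \(X\cap\tau \subset X\cap\partial\sigma\) for each face \(\tau \ni p_\sigma(x)\), and at the very end the global bound follows from \(\dist(x, X) \le \dist(x, X\cap\sigma)\). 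Also, the "compatibility of the centers \(a_\sigma\)" you worry about at the end is not actually an issue: each cell is coned from the already-defined boundary data, so continuity across faces is automatic and the \(a_\sigma\) can be chosen independently.
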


\begin{proof}
Let \(\mathcal{K}\) be a triangulation of a polyhedral neighborhood \(K^{\nu}\) of \(N^n\) such that \(N^n\) is a Lipschitz deformation retract of \(K^{\nu}\).
In particular, \(K^\nu\) and \(N^n\) are homotopically equivalent \cite{Hatcher}*{p.~3} and there exists a Lipschitz retraction \(h : K^{\nu} \to N^{n}\).
We extend \(\mathcal{K}\)  as a triangulation of \(Q_R^{\nu}\) that we denote by \(\mathcal{T}\).
Since for every \(j \in \{0, \dots, \ell\}\),  
\[
\pi_j(K^{\nu}) \simeq \pi_j(N^n) \simeq  \{0\},
\]
there exists a Lipschitz retraction \(g : T^{\ell+1} \cup K^{\nu} \to K^{\nu}\). 
Denoting by \(\mathcal{L}\) a dual skeleton of \(\mathcal{T}\) \cite{Vick}*{Chapter~6},
let \(f : \big(T^{\nu} \setminus L^{\nu-\ell-2}\big) \cup K^{\nu} \to T^{\ell+1}\cup K^{\nu} \) be a locally Lipschitz retraction such that for every \(x\in \big(T^{\nu} \setminus L^{\nu-\ell-2}\big) \cup K^{\nu}\),
\[
  \abs{Df(x)} \leq C \frac{1}{\dist( x, L^{\nu-\ell-2})}.
\]
%Finally, since  \(N^n\) is a Lipschitz retract of \(K^{\nu},\) there exists a Lipschitz retraction \(h : K^{\nu} \to N^{n}\). 
The conclusion follows by taking 
\[
X :=  \overline{L^{\nu-\ell-2} \setminus K^{\nu}} \quad \text{and} \quad \kappa := h \circ g \circ f.\qedhere
\]
\end{proof}

The next lemma ensures that the approximation we construct in the proof of Proposition~\ref{densityrwsp} belongs to a suitable class \(\mathcal{R}_j\).

\begin{lemma}\label{lemmaconstantrank}
Let \(\Omega \subset \R^m\) be an open set, \(v \in C^{\infty} (\Omega; \R^\nu)\) and let \(\lambda \in \N\) be such that \(\lambda \leq \min{\{m, \nu\}}\). 
If \(Y \subset \R^\nu\) is a finite union of \(\nu-\lambda\) dimensional planes, then for almost every \(\xi \in \R^\nu\),
\begin{enumerate}[\((i)\)]
\item the set \(v^{-1}(Y+\xi)\) is a finite union of smooth submanifolds of \(\R^m\) of dimension \(m-\lambda\),
\item 
for every compact subset \(K \subset \Omega\) there exists a constant \(C>0\) such that for every \(x \in K\),
\[
\dist{(x , v^{-1}(Y + \xi))} \leq C \dist{(v(x), Y + \xi)}. 
\]
\end{enumerate} 
\end{lemma}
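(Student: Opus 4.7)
The plan is to deduce both parts from transversality: a generic translate of each plane of $Y$ will be transverse to $v$, so that (i) follows from the implicit function theorem and (ii) from a compactness argument turning a local implicit-function-theorem estimate into a uniform one on $K$.

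First, I would write $Y = P_1 \cup \dots \cup P_k$ as a finite union of codimension-$\lambda$ affine subspaces, and represent each $P_i$ as $P_i = \pi_i^{-1}(b_i)$ for $\pi_i : \R^\nu \to \R^\lambda$ the orthogonal projection onto the orthogonal complement of its direction (identified with $\R^\lambda$) and some $b_i \in \R^\lambda$, so that $P_i + \xi = \pi_i^{-1}(b_i + \pi_i(\xi))$. By Sard's theorem applied to the smooth map $g_i := \pi_i \circ v : \Omega \to \R^\lambda$, the set of critical values of $g_i$ is Lebesgue-null in $\R^\lambda$; a Fubini argument along the orthogonal splitting $\R^\nu = \ker \pi_i \oplus (\ker \pi_i)^\perp$ then shows that $\{\xi \in \R^\nu : b_i + \pi_i(\xi) \text{ is a critical value of } g_i\}$ is null in $\R^\nu$. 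Intersecting over $i = 1, \dots, k$ leaves a full-measure set of good $\xi$, for which the implicit function theorem identifies each $v^{-1}(P_i + \xi) = g_i^{-1}(b_i + \pi_i(\xi))$ as a smooth submanifold of $\Omega$ of dimension $m - \lambda$; this proves (i).

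For (ii), I would reduce to a plane-wise estimate: once, for each $i$ with $v^{-1}(P_i+\xi) \neq \emptyset$, one has $\dist(x, v^{-1}(P_i+\xi)) \leq C_i \, \dist(v(x), P_i+\xi)$ on $K$, choosing the index $i^*$ minimising $\dist(v(x), P_j+\xi)$ among such $j$ yields the conclusion, since any plane $P_j+\xi$ disjoint from $v(\Omega)$ forces $\dist(v(x), P_j+\xi)$ to be bounded below on $K$ and can be absorbed into a larger constant. Setting $\Phi_i(x) := \pi_i(v(x)-\xi) - b_i$, the orthogonal choice of $\pi_i$ gives $\dist(v(x), P_i+\xi) = |\Phi_i(x)|$, and transversality makes $D\Phi_i$ surjective on $\Phi_i^{-1}(0) = v^{-1}(P_i+\xi)$. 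The implicit function theorem then produces, around each $x_0$ in this zero set, a neighbourhood $U_{x_0}$ and a constant $C_{x_0}$ on which $\dist(x, v^{-1}(P_i+\xi)) \leq C_{x_0} |\Phi_i(x)|$.

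The main obstacle is the compactness step that turns these local estimates into a single uniform one on $K$. I would fix a compact $K'$ with $K \subset \Int K' \subset K' \subset \Omega$ and cover the compact set $v^{-1}(P_i+\xi) \cap K'$ by finitely many $U_{x_j}$, calling $V_i$ their union. The buffer $K \subset \Int K'$ guarantees that any $x \in K$ with $\dist(x, v^{-1}(P_i+\xi)) < \dist(K, \partial K')$ has its nearest witness already in $K'$, hence in $V_i$, and the local estimates combine on $K \cap V_i$ with uniform constant $\max_j C_{x_j}$. On $K \setminus V_i$ the continuous function $|\Phi_i|$ cannot vanish (a zero there would lie in $v^{-1}(P_i+\xi) \cap K \subset V_i$) and is therefore bounded below by compactness, while $\dist(x, v^{-1}(P_i+\xi))$ is bounded above on $K$; absorbing these finite constants completes the plane-wise estimate and, combined with the reduction above, yields (ii).
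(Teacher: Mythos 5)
Your proof is correct and follows essentially the same route as the paper: Sard's theorem plus a Fubini argument for (i), and for (ii) local estimates obtained from the implicit function theorem (the paper makes this concrete by constructing the diffeomorphism $\psi$ with $P\circ v = P\circ Dv(a)\circ\psi$ and reducing to a linear estimate), made uniform by a compactness/covering argument, followed by a minimum argument over the individual planes (where you are in fact slightly more careful than the paper in explicitly treating planes with empty preimage). One small remark: the step ``has its nearest witness already in $K'$, hence in $V_i$'' does no logical work, since ``the nearest witness lies in $V_i$'' does not give ``$x\in V_i$''; the dichotomy you state right after — on $K\cap V_i$ the point $x$ itself lies in some $U_{x_j}$ and the local estimate applies, and on the compact set $K\setminus V_i$ the function $\abs{\Phi_i}$ is bounded below while $\dist(x,v^{-1}(P_i+\xi))$ is bounded above — already carries the argument on its own, so the buffer $K'$ is superfluous.
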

\begin{proof}
We first assume that \(Y\) is a single \(\nu - \lambda\) dimensional plane and, without loss of generality,
\begin{equation}
\label{eqChoiceY}
Y=\{0'\} \times \R^{\nu-\lambda}
\end{equation}
with \(0'\ \in \R^\lambda\). 
Let \(P : \R^\lambda \times \R^{\nu-\lambda} \to \R^\lambda\) be the orthogonal projection on the  \(\lambda\) first coordinates. 
For every \(\xi = (\xi', \xi'') \in \R^\lambda \times \R^{\nu-\lambda}\),
\[
v^{-1}(Y + \xi)
= v^{-1}(Y + (\xi', 0''))
= v^{-1}(P^{-1}(\{\xi'\}))
= (P \circ v)^{-1}(\{\xi'\}).
\]
By Sard's lemma, almost every \(\xi' \in \R^\lambda\) is a regular value of the map \(P \circ v\). 
We deduce in this case that \(v^{-1}(Y + \xi)\) is an \(m - \lambda\) smooth submanifold of \(\Omega\).

We pursue the proof of the estimate in \((ii)\) by assuming that \(\xi = 0\) and \(Y\) is of the form \eqref{eqChoiceY}
where every element of \(Y\) is a regular value of \(P\circ v\).
Given \(a \in \Omega\) such that \(v(a) \in Y\),
the linear transformation \(P\circ Dv(a)\) is surjective, whence there exist \(\delta>0\) with \(\overline{B^{m}_\delta(a)} \subset \Omega\) and a smooth diffeomorphism \(\psi : \overline{B^{m}_{\delta}(a)} \to \R^m\) such that for every \(x \in \overline{B^{m}_{\delta}(a)}\),
\begin{equation}\label{Pcircv}
P\circ v(x) = P\circ  Dv(a)[\psi(x)].
\end{equation}
This is a consequence of the Inverse function theorem.	
Indeed, let \(\psi_1\)  be the orthogonal projection in \(\R^m\) onto \( \ker P\circ Dv(a)\) and let \(\psi_2 = (P\circ Dv(a)|_{(\ker P\circ Dv(a))^{\perp}})^{-1} \circ P\circ v\). 
Then, \(D(\psi_1+\psi_2)(a) = \textrm{ id}_{\R^m}\), whence by the Inverse function theorem the function \(\psi=\psi_1+\psi_2\) is a smooth diffeomorphism in a neighborhood of \(a\) and satisfies \(P\circ v = P\circ Dv(a) \circ \psi\).

It follows from \eqref{Pcircv} that  \(\dist{(v(x), Y)}= \dist{(Dv(a)(\psi (x)), Y)}\).
Denoting by
\[
V = (Dv(a))^{-1}(Y),
\]
we observe that for every \(y \in B^m_\delta(a)\), \(v(y) \in Y\) if and only if \(\psi(y) \in V\).
Since \(\psi\) is a diffeomorphism, there exist \({\NewConstant} > 0\) such that for \(x \in B^m_\delta(a)\),
\[
\dist{(x, v^{-1}(Y) \cap B^m_\delta(a))}
\le {\SameConstant} \dist{(\psi(x), V \cap \psi(B^m_\delta(a)))}.
\]
By the counterpart of \((ii)\) for linear transformations, there exists a constant \({\Constant} > 0\) such that for every \(z \in \R^m\),
\[
\dist{(z, V)}
\le {\SameConstant} \dist{(Dv(a)[z], Y)};
\]
this property can be proved using the linear bijection \(R \circ Dv(a)|_{V^\perp}\), where \(R\) is the orthogonal projection onto \(Y^\perp\).
Thus, for every \(x \in B^m_\delta(a)\),
\[
\dist{(\psi(x), V)}
\le {\SameConstant} \dist{(Dv(a)[\psi(x)], Y)}
= {\SameConstant} \dist{(v(x), Y)}.
\]
To conclude the argument, take \(0 < \underline{\delta} \le \delta\) such that for every \(x \in B^{m}_{\underline{\delta}}(a)\),
\[
\dist{(x , v^{-1}(Y))} 
= \dist{(x, v^{-1}(Y) \cap B^{m}_{\delta}(a))}
\]  
and
\[
\dist{(\psi(x), V)} 
= \dist{(\psi(x), V \cap \psi(B_\delta^m(a)))}.
\]
We deduce from the above that for \(x \in B^{m}_{\underline{\delta}}(a)\),
\[
\dist{(x , v^{-1}(Y))} 
\le \NewConstant\Constant \dist{(v(x), Y)}. 
\]
Using a covering argument of \(K \cap v^{-1}(Y)\), the conclusion follows when \(Y\) is a single \(\nu - \lambda\) dimensional plane.

We now assume that \(Y\) is a finite union of \(\nu - \lambda\) dimensional planes \(Y_1, \dots, Y_j\).
The first assertion is true for almost every \(\xi \in \R^\nu\).
Concerning the second assertion, note that for every \(x \in \Omega\) and for every \(\xi \in \R^\nu\),
\[
\dist{(x , v^{-1}(Y + \xi))} = \min_{i \in \{1, \dots, j\}}{\dist{(x , v^{-1}(Y_i + \xi))}}
\]
and
\[
\dist{(v(x), Y + \xi)} = \min_{i \in \{1, \dots, j\}}{\dist{(v(x), Y_i + \xi)}}.
\]
Let \(\xi \in \R^\nu\). If the estimate holds for every \(Y_i\) with some constant \(C_i' > 0\), then for every \(x \in K\),
\[
\begin{split}
\dist{(x , v^{-1}(Y + \xi))}
& \leq  \Big(\max_{i \in \{1, \dots, j\}} {C_i'}\Big) \min_{i \in \{1, \dots, j\}} \dist{(v(x), Y_i + \xi)}\\
& =  \Big(\max_{i \in \{1, \dots, j\}} {C_i'}\Big) \dist{(v(x), Y + \xi)}. 
\end{split}
\]
This concludes the proof of the lemma.
\end{proof}

Given a domain \(\Omega \subset \R^m\) and a measurable function  \(u : \Omega \to \R^\nu\), we now estimate the convolution function \(\varphi_t * u\) and its derivative in terms of a fractional derivative of \(u\). 
More precisely, given \(0 < s < 1\) and \(1 \le p < +\infty\), define for \(x \in \Omega\) \cite{Mazya-Shaposhnikova-2002},
\[
\displaystyle D^{s, p}u(x) = \bigg(\int\limits_{\Omega} \frac{\abs{u(x) - u(y)}^p}{\abs{x - y}^{m + s p}} \dif y\bigg)^{1/p}.
\]
We assume that \(\varphi : \R^m \to \R\) be a mollifier. 
In other words,
\[
\label{eqTagMollifier}
\varphi \in C_c^\infty(B_1^m), \quad
\varphi \ge 0 \text{ in \(B_1^m\)}
\quad
\text{and}
\quad
\int\limits_{B_1^m} \varphi = 1.
\]
For every \(t > 0\), define \(\varphi_t : \R^m \to \R\) for \(h \in \R^m\) by
\[
\varphi_t(h) = \frac{1}{t^m} \varphi \Big( \frac{h}{t} \Big).
\]

Using the notation above we have the following: 

\begin{lemma}\label{lemmaestimationconvolution} 
If \(u \in W^{s, p}(\Omega; \R^\nu)\), then for every \(t > 0\) and for every \(x \in \Omega\) such that \(\dist{(x, \partial\Omega)} > t\),
\begin{enumerate}[\((i)\)]
\item \(\abs{\varphi_t * u(x) - u(x) } \leq C t^s D^{s, p} u(x)\),
\item \( \abs{D (\varphi_t * u)(x)} \leq C' t^{-(1-s)} D^{s,p}u(x)\),
\end{enumerate}
for some constants \(C > 0\) depending on \(\varphi\) and \(C' > 0\) depending on \(D\varphi\) and \(p\).
\end{lemma}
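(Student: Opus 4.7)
Both estimates share the same strategy: rewrite the quantity in question as an integral against $u(y) - u(x)$, apply H\"older's inequality with a splitting designed to produce $D^{s, p}u(x)$, and then read off the scaling in $t$.

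First I would observe that since $\int_{\R^m} \varphi_t = 1$ and, by the compact support of $\varphi$, $\int_{\R^m} D\varphi_t = 0$, we may insert a $u(x)$ term freely and write
\[
\varphi_t * u(x) - u(x) = \int_{B^m_t(x)} \varphi_t(x-y)\bigl(u(y) - u(x)\bigr) \dif y
\]
and
\[
D(\varphi_t * u)(x) = \int_{B^m_t(x)} D\varphi_t(x-y)\bigl(u(y) - u(x)\bigr) \dif y.
\]
The restriction of the integration domain to $B^m_t(x)$ comes from $\supp \varphi_t \subset \overline{B^m_t}$, and the assumption $\dist(x, \partial\Omega) > t$ guarantees $B^m_t(x) \subset \Omega$, so every $y$ contributing to the integrand also contributes to the defining integral of $D^{s, p}u(x)$.

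Next I would apply H\"older's inequality to the splitting
\[
|u(y) - u(x)| = \frac{|u(y)-u(x)|}{|x-y|^{(m+sp)/p}} \cdot |x-y|^{(m+sp)/p}.
\]
The $L^p(B^m_t(x))$ norm of the first factor is at most $D^{s, p}u(x)$ by definition. It then remains to control the $L^{p'}(B^m_t(x))$ norms of $\varphi_t(x-y)\,|x-y|^{(m+sp)/p}$ for~(i) and of $|D\varphi_t(x-y)|\,|x-y|^{(m+sp)/p}$ for~(ii).

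The closing step is a routine scaling check: using $\|\varphi_t\|_{L^\infty} \leq \|\varphi\|_{L^\infty}/t^m$ and $\|D\varphi_t\|_{L^\infty} \leq \|D\varphi\|_{L^\infty}/t^{m+1}$, the support bound $|x-y| \leq t$, and $|B^m_t| \simeq t^m$, the $L^{p'}$ norm picks up a factor $t^s$ in case~(i) and $t^{-(1-s)}$ in case~(ii); the bookkeeping of the exponents is handled by the identity $m/p + m/p' = m$. I do not foresee any conceptual obstacle: the whole argument reduces to one application of H\"older's inequality combined with the pointwise size of $\varphi_t$ and $D\varphi_t$. The only subtlety worth flagging is that in~(ii) the mean-zero property $\int D\varphi_t = 0$ must be exploited first, as it is what allows the bound to depend on $D^{s, p}u(x)$ rather than on $u$ itself.
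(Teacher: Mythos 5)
Your argument is correct and follows essentially the same strategy as the paper: use \(\int\varphi_t = 1\) and \(\int D\varphi_t = 0\) to rewrite each quantity as an integral of the kernel against \(u(y)-u(x)\), then combine an integral inequality with the scaling and compact support of \(\varphi_t\) to extract the factor \(t^s\) (resp.\ \(t^{-(1-s)}\)) and the density \(D^{s,p}u(x)\). The only cosmetic difference is the closing inequality: the paper applies Jensen's inequality with the probability measure \(\varphi_t(h)\,\mathrm{d}h\) (normalizing \(|D\varphi_t|\) by \(\int|D\varphi_t|\lesssim t^{-1}\) for part (ii)) followed by a pointwise bound on \(\varphi_t(h)|h|^{m+sp}\), whereas you split directly via H\"older with exponents \((p,p')\) and bound the \(L^{p'}\) norm of the kernel factor; the two mechanisms are equivalent in effect.
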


\begin{proof}
By Jensen's inequality,
\[
\begin{split}
\abs{\varphi_t * u(x)-u(x)}^p 
& \leq \int\limits_{\R^m} \varphi_t (h) \abs{u(x-h)-u(x)}^p \dif h\\
& = \int\limits_{\R^m} \varphi_t (h) \abs{h}^{m+sp} \frac{\abs{u(x-h)-u(x)}^p}{\abs{h}^{m+sp}} \dif h.
\end{split}
\]
Since \(\varphi_t\) is supported in \(B_t^m\), for every \(h \in \R^m\), \(\varphi_t(h) \abs{h}^{m+sp}\leq \NewConstant t^{sp}\). 
The first inequality follows.

Next, since \(\int_{\R^m} D \varphi_t = 0\),
\[
\abs{D (\varphi_t * u)(x)} \leq  \int\limits_{\R^m} \abs{D \varphi_t(h)} {\abs{u(x-h)-u(x)}} \dif h.
\]
Since 
\[
\int\limits_{\R^m} \abs{D\varphi_t} \le \frac{\Constant}{t},
\]
by Jensen's inequality,
\[
\begin{split}
\abs{D(\varphi_t * u)(x)}^p 
& \leq \frac{\SameConstant^{p - 1}}{t^{p-1}}
\int\limits_{\R^m} \abs{D\varphi_t(h)} \abs{u(x-h)-u(x)}^p \dif h\\
& = \frac{\SameConstant^{p - 1}}{t^{p-1}}
\int\limits_{\R^m} \abs{D\varphi_t(h)}\abs{h}^{m + sp} \frac{\abs{u(x-h)-u(x)}^p}{\abs{h}^{m + sp}} \dif h.
\end{split}
\]
Since for every \(h \in \R^m\), \(\abs{D\varphi_t(h)}|h|^{m + sp} \le \Constant t^{sp - 1}\), the second estimate follows.
\end{proof}

If \(u \in W^{s, p} (\Omega; \R^\nu)\) and \(\kappa : \R^\nu \to \R^\nu\) is Lipschitz continuous, then \(\kappa \circ u \in W^{s, p} (\Omega; \R^\nu)\) and 
\begin{equation}
\label{eqEstimateComposition}
[\kappa \circ u]_{W^{s, p}(\Omega)} 
\le \abs{\kappa}_{\Lip(\R^\nu)} [u]_{W^{s, p}(\Omega)},
\end{equation}
where \(\abs{\kappa}_{\Lip(\R^\nu)}\) denotes the best Lipschitz constant of \(\kappa\).
The next lemma gives the continuity of the composition operator \(u \mapsto \kappa \circ u\) in \(W^{s, p}\):

\begin{lemma}
\label{lemmaContinuityLipschitz}
Let \(\Omega \subset \R^m\) be a bounded open set and \(u \in W^{s, p} (\Omega; \R^\nu)\). 
For every \(\epsilon > 0\), there exists \(\delta > 0\) such that if \(\kappa : \R^\nu \to \R^\nu\) is Lipschitz continuous, \(v \in W^{s, p} (\Omega)\) and \(\norm{u - v}_{W^{s, p} (\Omega; \R^\nu)} \le \delta\), then 
\[
[\kappa \circ u - \kappa \circ v]_{W^{s, p}(\Omega)}
 \le \abs{\kappa}_{\Lip(\R^\nu)} \epsilon.
\]
\end{lemma}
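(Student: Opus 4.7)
Setting $f := \kappa\circ u - \kappa\circ v$, the proof will exploit two complementary pointwise bounds, both carrying the prefactor $\abs{\kappa}_{\Lip(\R^\nu)}$. Grouping the differences of $\kappa$ either by argument or by function yields, for every $x, y \in \Omega$,
\[
\abs{f(x) - f(y)} \le \abs{\kappa}_{\Lip(\R^\nu)} \bigl(\abs{u(x) - u(y)} + \abs{v(x) - v(y)}\bigr)
\]
and
\[
\abs{f(x) - f(y)} \le \abs{\kappa}_{\Lip(\R^\nu)} \bigl(\abs{u(x) - v(x)} + \abs{u(y) - v(y)}\bigr).
\]
The first estimate is useful when $x$ and $y$ are close (the increments of $u$ and $v$ on that scale are then small), the second when $\abs{x-y}$ is bounded below (the kernel $\abs{x-y}^{-m-sp}$ is then integrable in one variable over the complement of a ball).

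The plan is to split $\Omega \times \Omega$ into the near-diagonal piece $A_R := \{(x,y) \in \Omega \times \Omega : \abs{x-y}\le R\}$ and its complement, using the first bound on $A_R$ and the second on $(\Omega \times \Omega)\setminus A_R$. On $A_R$, after combining the inequalities $(a+b)^p \le 2^{p-1}(a^p+b^p)$ and $\abs{v(x)-v(y)} \le \abs{u(x)-u(y)} + \abs{(u-v)(x)-(u-v)(y)}$ with the first bound, one obtains a contribution at most $C\abs{\kappa}_{\Lip(\R^\nu)}^p(\eta^p + \delta^p)$, where $\eta^p := \int_{A_R}\abs{u(x)-u(y)}^p/\abs{x-y}^{m+sp}\dif x \dif y$. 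On the complement, Fubini's theorem together with the elementary identity $\int_{\{\abs{h}>R\}}\abs{h}^{-m-sp}\dif h = C' R^{-sp}$ gives a contribution at most $C''R^{-sp}\abs{\kappa}_{\Lip(\R^\nu)}^p \norm{u-v}_{L^p(\Omega)}^p \le C''R^{-sp}\abs{\kappa}_{\Lip(\R^\nu)}^p\delta^p$.

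Given $\eps > 0$, I would first choose $R > 0$ so small that $\eta \le \eps$; this is legitimate by absolute continuity of the Lebesgue integral, since $(x,y)\mapsto \abs{u(x)-u(y)}^p/\abs{x-y}^{m+sp}$ is integrable on $\Omega \times \Omega$ and $A_R$ shrinks to the (null) diagonal as $R \to 0$. With $R$ fixed, I would then choose $\delta > 0$ so small that $C(1 + R^{-sp})\delta^p \le \eps^p$. Summing the two contributions yields $[\kappa\circ u - \kappa\circ v]_{W^{s,p}(\Omega)} \le C_0\abs{\kappa}_{\Lip(\R^\nu)}\eps$ with an absolute $C_0$, which is absorbed into the conclusion by applying the argument with $\eps/C_0$ in place of $\eps$. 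The conceptual difficulty is that inequality~\eqref{eqEstimateComposition} applied term by term only yields $\abs{\kappa}_{\Lip(\R^\nu)}\bigl([u]_{W^{s,p}(\Omega)} + [v]_{W^{s,p}(\Omega)}\bigr)$, which carries no information on the closeness of $u$ and $v$; the decisive point is that the splitting along $A_R$ enables the simultaneous use of \emph{both} bounds above and converts, via the absolute continuity of the Gagliardo integrand, the $W^{s,p}$-closeness of $u$ and $v$ into smallness of the Gagliardo seminorm of the composition.
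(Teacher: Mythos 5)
Your proof is correct, and it takes a genuinely different route from the paper's, even though both rest on the same two complementary pointwise bounds for $\kappa\circ u - \kappa\circ v$. Where you decompose $\Omega\times\Omega$ geometrically as $A_R=\{\abs{x-y}\le R\}$ and its complement, the paper decomposes along the \emph{data-dependent} level set
\[
A_{v,\epsilon}=\bigl\{(x,y):\abs{u(x)-v(x)}^p+\abs{u(y)-v(y)}^p\ge\epsilon\abs{x-y}^{m+sp}\bigr\},
\]
applying the "group by point" bound on the complement (where the ratio is below $\epsilon$ by construction, so the contribution is at most $2^{p-1}\abs{\kappa}_{\Lip}^p\epsilon\abs{\Omega}^2$) and the "group by function" bound on $A_{v,\epsilon}$, and then showing that $\iint_{A_{v,\epsilon}}\abs{u(x)-u(y)}^p/\abs{x-y}^{m+sp}\to 0$ as $v\to u$ by dominated convergence. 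Your geometric split makes the argument more explicit and quantitative: the far region is handled by an elementary Fubini computation of $\int_{\{\abs{h}>R\}}\abs{h}^{-m-sp}\dif h=C'R^{-sp}$, the near region by absolute continuity of the Gagliardo integrand, and the choice of $\delta$ in terms of $\epsilon$ (first fix $R$, then shrink $\delta$) is fully constructive, with no recourse to subsequences or a dominated-convergence limit. The paper's level-set split is slicker in that it bypasses the kernel computation entirely, but it is less transparent about how $\delta$ depends on $\epsilon$. Both arguments give the same uniform-in-$\kappa$ modulus, with the constant absorbed by rescaling $\epsilon$ at the end, exactly as you do.
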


By a result of Marcus and Mizel~\cite{MarcusMizel1979}*{Theorem~1} in the scalar case \(\nu = 1\), the map \(u \in W^{1, p} (\Omega; \R) \mapsto \kappa \circ u \in W^{1, p} (\Omega; \R)\) is continuous.
Lemma~\ref{lemmaContinuityLipschitz} has been proved by Bourgain, Brezis and Mironescu 
~\cite{Bourgain-Brezis-Mironescu-2004}*{Claim (5.43)}.
For the convenience of the reader we present their proof, organized differently.

\begin{proof}[Proof of Lemma~\ref{lemmaContinuityLipschitz}]
For \(u, v \in W^{s, p} (\Omega; \R^\nu)\) and \(\kappa : \R^\nu \to \R^\nu\), define for  \(x, y \in \Omega\),
\[
 I (x, y) = \frac{\abs{\kappa (u (x)) - \kappa (v (x)) - \kappa (u (y)) + \kappa (v (y))}^{p}}{\abs{x - y}^{m + sp}},
\]
so that
\[
[\kappa \circ u - \kappa \circ v]_{W^{s, p}(\Omega)}
= \int\limits_{\Omega} \int\limits_{\Omega} I(x, y) \dif x \dif y.
\]
Observe that
\[
\begin{split}
 I (x, y) 
 &\le 2^{p - 1} \frac{\abs{\kappa (u (x)) - \kappa (v (x))}^p + \abs{\kappa (u (y)) - \kappa (v (y))}^{p}}{\abs{x - y}^{m + sp}}\\
 &\le 2^{p - 1} \abs{\kappa}_{\Lip(\R^\nu)}^p \frac{\abs{u (x) - v (x)}^p + \abs{u (y) - v (y)}^{p}}{\abs{x - y}^{m + sp}}
\end{split}
\]
and that 
\[
\begin{split}
 I (x, y) 
 &\le 2^{p - 1} \frac{\abs{\kappa (u (x)) - \kappa (u (y))}^p + \abs{\kappa (v (x)) - \kappa (v (y))}^{p}}{\abs{x - y}^{m + sp}}\\
 &\le 2^{p - 1} \abs{\kappa}_{\Lip(\R^\nu)}^p \frac{\abs{u (x) - u (y)}^p + \abs{v (x) - v (y)}^{p}}{\abs{x - y}^{m + sp}}\\
 &\le {\NewConstant} \abs{\kappa}_{\Lip(\R^\nu)}^p 
 \bigg( \frac{\abs{u (x) - u (y)}^p}{\abs{x - y}^{m + sp}} + \frac{\abs{u (x) - v(x) - u (y) + v(y)}^p}{\abs{x - y}^{m + sp}} \bigg).
\end{split}
\]

Given \(\epsilon > 0\), let 
\[
A_{v, \epsilon} 
= \Bigl\{ (x, y) \in \Omega \times \Omega : \abs{u (x) - v (x)}^p + \abs{u (y) - v (y)}^{p} \ge \epsilon \abs{x - y}^{m + sp}\Bigr\}.
\]
Using the first upper bound of \(I(x, y)\) on the set \((\Omega \times \Omega) \setminus A_{v, \epsilon}\) and the second one on the set \(A_{v, \epsilon}\), we get
\begin{multline*}
[\kappa\circ u - \kappa\circ v]^p_{W^{s, p}(\Omega)}\\
\le \abs{\kappa}_{\Lip(\R^\nu)}^p 
\biggl(2^{p - 1} \epsilon \abs{\Omega}^2
 + {\SameConstant} \iint\limits_{A_{v, \epsilon}} \frac{\abs{u (x) - u (y)}^p}{\abs{x - y}^{m + sp}} \dif x \dif y + {\SameConstant} [u - v]_{W^{s, p}(\Omega)}^p\biggr).
\end{multline*}
Since \(u \in W^{s, p}(\Omega)\) and \(\abs{A_{v, \epsilon}} \to 0\) as \(v \to u\) in \(W^{s, p} (\Omega)\), the conclusion follows from the Dominated convergence theorem.
\end{proof}

Despite of the estimate \eqref{eqEstimateComposition}, when \(\kappa\) is not affine there is no inequality of the form
\[
[\kappa \circ u - \kappa \circ v]_{W^{s, p}(\Omega)} 
\le C \abs{\kappa}_{\Lip(\R^\nu)} [u - v]_{W^{s, p}(\Omega)}.
\]
In fact, the map \(u \mapsto \kappa \circ u \) is not even uniformly continuous in \(W^{s, p}\).
We explain the argument when the domain is the unit cube \(Q^m\).
For this purpose, let \(\varphi \in C^\infty_c (Q^m; \R^\nu)\) and  denote by \(\Bar{\varphi}\)  the periodic extension of \(\varphi\) to \(\R^m\).
Define for \(j \in \N_*\),
\[
v_j (x) = \Bar{\varphi} (j x)
\]
and, for some fixed \(\xi \in \R^\nu\),
\[
u_j (x) = \Bar{\varphi} (j x) + \xi.
\]
We observe that 
\[
\norm{u_j - v_j}_{W^{s, p}(Q^m)} 
= \norm{u_j - v_j}_{L^{p}(Q^m)} 
= 2^m \abs{\xi}
\]
whereas 
\begin{multline}
\label{eqEstimationSeminormeGagliardo}
[\kappa \circ u_j - \kappa \circ v_j]_{W^{s, p}(Q^m)}^p \\
\ge  j^{sp} \int\limits_{Q^m}\int\limits_{Q^m} \frac{\bigabs{\kappa (\varphi (x) + \xi) - \kappa(\varphi (x)) - \kappa (\varphi (y) + \xi) + \kappa (\varphi (y))}^p}{\abs{x - y}^{m + sp}} \dif x \dif y.
\end{multline}
When \(\kappa\) is not affine, there exist \(\xi, \tau, \sigma \in \R^\nu\) such that 
\[
 \kappa (\tau + \xi) - \kappa (\tau) 
 \ne \kappa (\sigma + \xi) - \kappa (\sigma).
\]
Taking \(\varphi \in C^\infty_c (Q^m; \R^\nu)\) for which both sets 
\(\varphi^{-1}(\{\sigma\})\) and \(\varphi^{-1}(\{\tau\})\) have positive measure, we have
\[
\int\limits_{Q^m} \int\limits_{Q^m} \frac{\bigabs{\kappa (\varphi (x) + \xi) - \kappa(\varphi (x)) - \kappa (\varphi (y) + \xi) + \kappa (\varphi (y))}^p}{\abs{x - y}^{m + sp}} \dif x \dif y  > 0. 
\]
As we let \(j\) tend to infinity in \eqref{eqEstimationSeminormeGagliardo}, we conclude that \(u \mapsto \kappa \circ u\) is not uniformly continuous in \(W^{s, p}\).

\begin{proof}[Proof of Proposition~\ref{densityrwsp}]
Let \(u\in W^{s, p}(Q^m; N^n)\). 
The restrictions to \(Q^m\) of the maps \(u_\gamma \in W^{s, p}(Q_{1 + 2\gamma}^m; N^n)\) defined for \(x \in Q_{1 + 2 \gamma}^m\) by \(u_\gamma(x) = u (x/(1 + 2 \gamma))\)  converge strongly to \(u\) in \(W^{s, p}(Q^m; N^n)\) as \(\gamma\) tends to \(0\). 
We can thus assume from the beginning that \(u \in W^{s, p}(Q_{1 + 2\gamma}^m; N^n)\) for some \(\gamma>0\).

Let \(\kappa : \R^\nu \setminus X \to N^n\) be the locally Lipschitz retraction of Lemma~\ref{lemmaprojectionhardtlin} with \(\ell=\floor{sp}-1\); we may assume that \(\nu \ge \ell + 2\).
For every \(\xi \in \R^\nu\), we consider the map \(\kappa_\xi : \R^\nu \setminus (X + \xi) \to N^n\) defined by
\[
\kappa_\xi(x) = \kappa(x - \xi).
\]

Given a mollifier \(\varphi\) (see p.~\pageref{eqTagMollifier} above), the map \(\kappa_\xi \circ (\varphi_t * u)\) is locally Lipschitz continuous in \(Q_{1 + \gamma}^m\setminus (\varphi_t * u)^{-1}(X + \xi)\). 
Moreover, by the chain rule and by the pointwise estimate satisfied by \(D\kappa\), 
\begin{equation}
\label{estimateduea}
\bigabs{D[\kappa_\xi \circ (\varphi_t * u)]}
\leq {\NewConstant} \frac{\abs{D (\varphi_t * u)}}{\dist{(\varphi_t * u, X+ \xi)}}.
\end{equation}

The set \(X\) is contained in a finite union of \(\nu-\floor{sp}-1\) dimensional planes \(Y\) in \(\R^\nu\). 
Applying Lemma~\ref{lemmaconstantrank} to \(v= \varphi_t * u \in C^{\infty}(Q^{m}_{1+\gamma} ; \R^\nu)\), we obtain that for every \(0 < t \le \gamma\) and for almost every \(\xi \in \R^{\nu}\), the set \((\varphi_t * u)^{-1}(X + \xi)\) is contained in a finite union of \(m-\floor{sp}-1\) dimensional submanifolds,
\[
T = {(\varphi_t * u)^{-1}(Y+\xi)}.
\] 
By \eqref{estimateduea} and the inclusion  \(X \subset Y\), 
\[
\bigabs{D[\kappa_\xi \circ (\varphi_t * u)]}
\leq \Constant \frac{1}{\dist{(\varphi_t * u, X + \xi)}}
\leq \SameConstant \frac{1}{\dist{(\varphi_t * u, Y + \xi)}}.
\]
By the second part of Lemma~\ref{lemmaconstantrank}, we conclude that for \(x \in \overline{Q}^m \setminus (\varphi_t * u)^{-1}(Y + \xi)\),
\[
\bigabs{D[\kappa_\xi \circ (\varphi_t * u)](x)} 
\leq \Constant \frac{1}{\dist{(x, (\varphi_t * u)^{-1}(Y + \xi))}} = \frac{\SameConstant}{\dist{(x, T)}}. 
\]
In particular, for every \(0 < t \le \gamma\) and for almost every \(\xi \in \R^\nu\), the map \(\kappa_\xi \circ (\varphi_t * u)\) belongs to \(\mathcal{R}_{m-\floor{sp}-1}(Q^m; N^n)\).

\medskip
We proceed using an idea from \cite{Bourgain-Brezis-Mironescu-2004} for \(W^{\frac{1}{2}, 2}\) maps with values into the circle \(\S^1\).
Let 
\[
\alpha= \frac{1}{4}\dist{(X, N^n)},
\]
let \(\theta : \R^\nu \to \R\) be a Lipschitz continuous function such that 
\begin{enumerate}[\((a)\)]
\item for \(\dist{(x, X)} \le 2\alpha\), \(\theta(x) = 1\),
\item for \(\dist{(x, X)} \ge 3\alpha\), \(\theta(x) = 0\),
\end{enumerate}
and let 
\[
 \Bar{\kappa}_\xi  = (1 - \theta) \kappa_{\xi} \quad \text{and} \quad \underline{\kappa}_\xi = \theta \kappa_{\xi}.
\]
Since \(\kappa_{\xi} = \Bar{\kappa}_{\xi}\) on \(u (Q^{m}_{1 + 2 \gamma}) \subset N^n\), we have by the triangle inequality,
\begin{multline}
\label{eqTriangleInequality}
\bignorm{\kappa_\xi \circ (\varphi_t * u) - u}_{W^{s, p}(Q^m)}\\
\le
\norm{\underline{\kappa}_\xi \circ (\varphi_t * u)}_{W^{s, p}(Q^m)} + 
\norm{\Bar{\kappa}_{\xi} \circ (\varphi_t * u) - \Bar{\kappa}_\xi \circ u}_{W^{s, p}(Q^m)}\\ + \norm{\kappa_{\xi} \circ u - u}_{W^{s, p}(Q^m)}.
\end{multline}
Since \(\kappa\) is Lipschitz continuous on a neighborhood of \(N^n\) and \(\kappa_\xi \circ u = \kappa(u - \xi)\), we have by continuity of the composition operator in \(W^{s, p}\) (Lemma~\ref{lemmaContinuityLipschitz}),
\begin{equation}
\label{eqLimitThirdTerm}
\lim\limits_{\xi \to 0}{\norm{\kappa_{\xi} \circ u - u}_{W^{s,  p}(Q^m)}}=0.
\end{equation}
By Lemma~\ref{lemmaContinuityLipschitz}, as the maps \(\Bar{\kappa}_{\xi}\) are uniformly Lipschitz continuous and \(\varphi_t * u\) converges to \(u\) in \(W^{s, p} (Q^m)\),
\begin{equation}
\label{eqLimitSecondTerm}
 \lim_{t \to 0} \norm{\Bar{\kappa}_{\xi} \circ (\varphi_t * u) - \Bar{\kappa}_{\xi} \circ u}_{W^{s, p}(Q^m)} = 0,
\end{equation}
uniformly with respect to \(\xi\).

It remains to estimate the first term in the right hand side of \eqref{eqTriangleInequality}. 
This is done in the following:

\begin{Claim}\label{lemmacvgceuea}
For every \(0 < t \le \gamma\),
\[
\int\limits_{B^\nu_{\alpha}} \|\underline{\kappa}_\xi \circ (\varphi_t * u)\|_{W^{s, p}(Q^m)}^p \dif\xi
\le 
C \int\limits_{\{\abs{\varphi_t * u - u} \ge \alpha\}} (D^{s, p}u)^p
\]
\end{Claim}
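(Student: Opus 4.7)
The plan is to estimate separately the \(L^p\) norm and the Gagliardo seminorm contributions to \(\norm{\underline{\kappa}_\xi \circ (\varphi_t * u)}_{W^{s, p}(Q^m)}\), exploiting in both cases the following support property: for \(\xi \in B^\nu_\alpha\), the function \(\underline{\kappa}_\xi \circ (\varphi_t * u)\) is supported in the set \(E := \{ x \in Q^m : \abs{\varphi_t * u (x) - u (x)} \ge \alpha \}\). Indeed, if \(\underline{\kappa}_\xi(\varphi_t * u(x)) \ne 0\), then \(\theta(\varphi_t * u(x) - \xi) \ne 0\), so there is some \(x_0 \in X\) close to \(\varphi_t * u(x) - \xi\); combining with \(\abs{u(x) - x_0} \ge \dist{(N^n, X)} = 4 \alpha\) via the triangle inequality gives a positive lower bound on \(\abs{\varphi_t * u(x) - u(x)}\) of order \(\alpha\) (possibly after tuning the cutoff levels in \(\theta\) or restricting to a smaller ball in the \(\xi\) integration).

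For the \(L^p\) contribution, the uniform bound \(\abs{\underline{\kappa}_\xi} \le \norm{\kappa}_{L^\infty}\) combined with the support property yields
\[
\int\limits_{B^\nu_\alpha} \int\limits_{Q^m} \bigabs{\underline{\kappa}_\xi \circ (\varphi_t * u)}^p \dif x \dif \xi \le C \alpha^\nu \abs{E}.
\]
By Lemma~\ref{lemmaestimationconvolution}(i), on \(E\) one has \(D^{s, p} u \ge \alpha/(C t^s)\), so \(\abs{E} \le (C t^s / \alpha)^p \int_E (D^{s, p} u)^p\), which controls this term.

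For the Gagliardo seminorm, I would apply the Gagliardo–Nirenberg interpolation inequality \cite{Bourgain-Brezis-Mironescu-2000}*{Lemma~D.1},
\[
[v]_{W^{s, p}(Q^m)}^p \le C \norm{v}_{L^\infty(Q^m)}^{(1-s) p} \norm{D v}_{L^{sp}(Q^m)}^{sp},
\]
to \(v := \underline{\kappa}_\xi \circ (\varphi_t * u)\). The chain rule, together with the pointwise bound \(\abs{D \kappa(z)} \le C / \dist{(z, X)}\) from Lemma~\ref{lemmaprojectionhardtlin} and Lemma~\ref{lemmaestimationconvolution}(ii), yields
\[
\abs{D v(x)} \le \frac{C' t^{-(1-s)} D^{s, p} u(x)}{\dist{(\varphi_t * u(x) - \xi, X)}}.
\]
Raising to the power \(sp\), restricting to \(x \in E\) by the support property, and integrating over \(\xi \in B^\nu_\alpha\), the change of variable \(\eta = \varphi_t * u(x) - \xi\) reduces the inner integral to \(\int d\eta / \dist{(\eta, X)}^{sp}\), which is uniformly bounded because \(X\) is contained in a finite union of affine subspaces of codimension \(\floor{sp} + 1 > sp\). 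Hence
\[
\int\limits_{B^\nu_\alpha} \norm{D v}_{L^{sp}(Q^m)}^{sp} \dif \xi \le C'' t^{-(1-s) sp} \int\limits_E (D^{s, p} u)^{sp}.
\]
Hölder's inequality then gives \(\int_E (D^{s, p} u)^{sp} \le \abs{E}^{1 - s} \bigl( \int_E (D^{s, p} u)^p \bigr)^s\), and inserting \(\abs{E} \le (C t^s / \alpha)^p \int_E (D^{s, p} u)^p\) makes the negative powers of \(t\) cancel after the Gagliardo–Nirenberg inequality is applied, producing the desired bound on the seminorm.

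The main technical obstacle is verifying the support property with the exact constant \(\alpha\) appearing on the right hand side of the claim (which requires careful tuning of the cutoff levels of \(\theta\) against the radius of the \(\xi\)-ball), and ensuring that the integrability of \(1/\dist(\eta, X)^{sp}\) against the codimension of \(X\) yields a \(\xi\)-uniform constant; a secondary issue is applying the Gagliardo–Nirenberg inequality up to the boundary of \(Q^m\), which can be handled by a preliminary extension.
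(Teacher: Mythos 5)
The overall plan is sound and, at its core, the same as the paper's: isolate the support of \(\underline{\kappa}_\xi \circ (\varphi_t * u)\) inside \(\{\abs{\varphi_t * u - u} \ge \alpha\}\), apply a Gagliardo--Nirenberg interpolation to reduce the \(W^{s,p}\) seminorm to a first-order Sobolev norm, bound \(D(\underline{\kappa}_\xi \circ (\varphi_t * u))\) via the chain rule and the singular estimate on \(D\kappa\), and then average over \(\xi \in B^\nu_\alpha\) to tame the factor \(\dist(\cdot, X+\xi)^{-1}\). The \(\xi\)-averaging step and the use of Lemma~\ref{lemmaestimationconvolution} together with Chebyshev to absorb the powers of \(t\) are exactly as in the paper, and the algebra checking that the \(t\)-powers cancel is correct.

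There are, however, two issues. The more serious one is your choice of exponents in the Gagliardo--Nirenberg inequality. You interpolate with \(r = \infty\) and \(q = sp\), writing \([v]_{W^{s,p}}^p \le C\norm{v}_{L^\infty}^{(1-s)p}\norm{Dv}_{L^{sp}}^{sp}\). This is precisely the endpoint case \(W^{1,sp}\cap L^\infty \hookrightarrow W^{s,p}\), and the paper explicitly points out (in the proof of Theorem~\ref{deuxiemetheorem} for \(sp\ge 1\)) that this fails when \(sp = 1\): the embedding \(W^{1,1}\cap L^\infty \hookrightarrow W^{s,1/s}\) is false. Since the Claim is asserted for all \(sp \ge 1\), your argument breaks down at the borderline \(sp = 1\). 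The paper avoids this by taking \(1 < q < p < r < \infty\) with \(1/p = (1-s)/r + s/q\), and then imposing the two-sided constraint \(sp \le q < \floor{sp}+1\) (note the strict inequality \(q > 1\) is built in). Your \(\xi\)-average with the exponent \(q\) in place of \(sp\) goes through equally well, since \(\int d\eta/\dist(\eta, X)^q < \infty\) only needs \(q < \floor{sp}+1\); so the fix is simply to allow \(q\) to be slightly larger than \(sp\) and to replace \(L^\infty\) by \(L^r\) in the interpolation.

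The second, minor issue is your derivation of the support property. As defined in the paper, \(\underline{\kappa}_\xi = \theta\,\kappa_\xi\) with the cutoff \(\theta\) tied to \(\dist(\cdot, X)\), not to \(\dist(\cdot, X+\xi)\); the correct implication is that \(\theta(\varphi_t*u(x)) \ne 0\) forces \(\dist(\varphi_t*u(x), X) < 3\alpha\), whence \(\abs{\varphi_t*u(x) - u(x)} \ge 4\alpha - 3\alpha = \alpha\) with no loss. Your version with \(\theta(\varphi_t*u(x)-\xi)\) introduces an extra \(\abs{\xi}\le\alpha\) in the triangle inequality and only yields a lower bound of \(0\), which is why you flagged the need to "tune the cutoff levels." Under the paper's actual definition no tuning is needed and the constant \(\alpha\) comes out on the nose.
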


We assume temporarily the claim, and complete the proof of Proposition~\ref{densityrwsp}.
Since \(D^{s, p} u \in L^p(Q^m)\) and \(\varphi_t * u\) converges to \(u\) in measure as \(t\) tends to zero, by the claim we have
\[
\lim_{t \to 0}{\int\limits_{B_\alpha^\nu}
\norm{\underline{\kappa}_\xi \circ (\varphi_t * u)}_{W^{s, p}(Q^m)}^p 
\dif\xi
} = 0.
\]
By the Chebyshev inequality, 
\begin{multline*}
\lim_{t \to 0}
\biggabs{\biggl\{\xi \in B_\alpha^m : \norm{\underline{\kappa}_\xi \circ (\varphi_t * u)}_{W^{s, p}(Q^m)}^p \ge \biggl( \int\limits_{B_\alpha^m}
\norm{\underline{\kappa}_\zeta \circ (\varphi_t * u)}_{W^{s, p}(Q^m)}^p 
\dif\zeta
\biggr)^\frac{1}{2} \biggr\}}\\
 = 0.
\end{multline*}
Thus, for every \(0 < t \le \gamma\), there exists \(\xi_t \in B_\alpha^m\) such that \(\lim\limits_{t \to 0}{\xi_t} = 0\) and
\[
\lim_{t \to 0}\norm{\underline{\kappa}_{\xi_t} \circ (\varphi_t * u)}_{W^{s, p}(Q^m)} = 0.
\]
We conclude from \eqref{eqTriangleInequality}, \eqref{eqLimitThirdTerm} and \eqref{eqLimitSecondTerm} that
\[
\lim_{t \to 0}{\bignorm{\kappa_{\xi_t} \circ (\varphi_t * u) - u}_{W^{s, p}(Q^m)}} = 0.
\]
This gives the conclusion of Proposition~\ref{densityrwsp}.
\end{proof}

It remains to establish the claim:

\begin{proof}[Proof of the claim]
Let \(1 < q < p < r\) be such that 
\begin{equation}
\label{eqRelationExponents}
\frac{1}{p} 
= \frac{1-s}{r} + \frac{s}{q}.
\end{equation}
By the Gagliardo-Nirenberg interpolation inequality,
\begin{equation}
\label{eqEstimateGagliardoNirenberg}
\norm{\underline{\kappa}_\xi \circ (\varphi_t * u)}_{W^{s, p}(Q^m)} 
\leq {\Constant} \|\underline{\kappa}_\xi \circ (\varphi_t * u)\|_{L^{r}(Q^m)}^{1-s} \|\underline{\kappa}_\xi \circ (\varphi_t * u)\|_{W^{1, q}(Q^m)}^s. 
\end{equation}
As \(N^n\) is compact, we observe that the functions \(\underline{\kappa}_\xi \circ (\varphi_t * u)\) are uniformly bounded and supported on the set \(\big\{\dist{(\varphi_t * u, X)} \le 3\alpha\big\}\).
%\[
%\abs{\underline{\kappa}_\xi \circ (\varphi_t * u)}\leq 
%{\Constant} \chi_{\{\dist{(\varphi_t * u, X)} < 3\alpha\}}
%\]
%and
Moreover,
\[
\big\{\dist{(\varphi_t * u, X)} \le 3\alpha \big\} 
\subset \big\{|\varphi_t * u - u| \geq \alpha \big\}.
\]
Thus,
\begin{equation}
\label{eqLrestimate}
\|\underline{\kappa}_\xi \circ (\varphi_t * u)\|_{L^r(Q^m)}
\leq {\SameConstant} \bigabs{\{|\varphi_t * u - u| \geq \alpha\}}^\frac{1}{r}.
\end{equation}
Next, by the Leibniz rule and by \eqref{estimateduea},
\begin{equation*}
%\label{estimatelrnormdwe}
\begin{split}
\abs{D (\underline{\kappa}_\xi \circ (\varphi_t * u))} 
& \leq \Big( \abs{D\theta(\varphi_t * u)} \abs{\kappa_\xi (\varphi_t * u)} + \abs{\theta(\varphi_t * u)} \abs{D\kappa_\xi (\varphi_t * u)} \Big) \abs{D(\varphi_t * u)}\\
& \leq {\Constant} \bigg(1 + \frac{1}{\dist{(\varphi_t * u, X + \xi)}} \bigg)  
%\chi_{\{|\varphi_t * u - u| \geq \alpha\}} 
\abs{D(\varphi_t * u)}.
\end{split}
\end{equation*}
Since the functions \(D (\underline{\kappa}_\xi \circ (\varphi_t * u))\) are also supported in the set \(\big\{|\varphi_t * u - u| \geq \alpha \big\}\), we get
\begin{multline*}
\norm{\underline{\kappa}_\xi \circ (\varphi_t * u)}_{W^{1, q}(Q^m)}^q\\
\le
{\Constant} \int\limits_{\{|\varphi_t * u - u| \geq \alpha\}}
\bigg[ 1 + \bigg(1 + \frac{1}{\dist{(\varphi_t * u, X + \xi)}^q} \bigg) \abs{D(\varphi_t * u)}^q \bigg].
\end{multline*}
For
\[
\boxed{q \ge sp,}
\]
we have by Hölder's inequality and by Fubini's theorem,
\begin{multline*}
\int\limits_{B_\alpha^\nu}
\norm{\underline{\kappa}_\xi \circ (\varphi_t * u)}_{W^{1, q}(Q^m)}^{sp}
\dif\xi\\
\begin{aligned}
& \le \abs{B_\alpha^\nu}^{1 - \frac{sp}{q}} \Biggl(\; \int\limits_{B_\alpha^\nu}
\norm{\underline{\kappa}_\xi \circ (\varphi_t * u)}_{W^{1, q}(Q^m)}^{q} 
\dif\xi \Biggr)^\frac{sp}{q}\\
& \le {\Constant}\Biggl(\; \int\limits_{\{|\varphi_t * u - u| \geq \alpha\}} 
\int\limits_{B_\alpha^\nu}
\bigg[ 1 + \bigg(1 + \frac{1}{\dist{(\varphi_t * u(x), X + \xi)}^q} \bigg) \abs{D(\varphi_t * u)(x)}^q \bigg] \dif \xi \dif x \Biggr)^{\frac{sp}{q}}.
\end{aligned}
\end{multline*}
We have
\[
\begin{split}
\int\limits_{B_\alpha^\nu}
\frac{1}{\dist{(\varphi_t * u(x), X +\xi)}^q}
\dif \xi
& =  \int\limits_{B_\alpha^\nu}
\frac{1}{\dist{(\varphi_t * u(x) - X, \xi)}^q}
\dif \xi \\
& =  \int\limits_{B_\alpha^\nu+\varphi_t*u(x)}
\frac{1}{\dist{(X, \xi)}^q} \dif \xi \\
& \leq  \int\limits_{B_R^\nu}
\frac{1}{\dist{(X, \xi)}^q} \dif \xi ,
\end{split}
\]
where \(R > 0\) is such that for every \(x \in Q^m\), \(B_\alpha^\nu+\varphi_t*u(x) \subset B_R^\nu\).
Since \(X\) is a closed subset of a finite union of \(\nu - \floor{sp} - 1\) dimensional planes, assuming in addition that
\[
\boxed{q < \floor{sp} + 1,}
\]
then the last integral is finite.
Thus,
\[
\int\limits_{B_\alpha^\nu}
\norm{\underline{\kappa}_\xi \circ (\varphi_t * u)}_{W^{1, q}(Q^m)}^{sp}
\dif\xi
\le {\Constant} \biggl(\int\limits_{\{|\varphi_t * u - u| \geq \alpha\}} 
\big[1 + \abs{D(\varphi_t * u)}^q \big] \biggr)^{\frac{sp}{q}}.
\]
Inserting this estimate and \eqref{eqLrestimate} into \eqref{eqEstimateGagliardoNirenberg}, we deduce that
\begin{multline*}
\int\limits_{B_\alpha^\nu}
 \|\underline{\kappa}_\xi \circ (\varphi_t * u)\|_{W^{s, p}(Q^m)}^p \dif \xi\\
\leq {\Constant} \bigabs{\{|\varphi_t * u - u| \geq \alpha\}}^\frac{(1-s)p}{r} \biggl(\int\limits_{\{|\varphi_t * u - u| \geq \alpha\}}
\big[ 1 + \abs{D(\varphi_t * u)}^q \big] \biggr)^{\frac{sp}{q}}. 
\end{multline*}
Since \(q < p\), by Hölder's inequality and by the identity \eqref{eqRelationExponents} satisfied by the exponents \(r\), \(p\) and \(q\),
\begin{multline*}
\int\limits_{B_\alpha^\nu}
\|\underline{\kappa}_\xi \circ (\varphi_t * u)\|_{W^{s, p}(Q^m)}^p 
\dif\xi\\
\leq {\Constant} \bigabs{\{|\varphi_t * u - u| \geq \alpha\}}^{1-s} \biggl(\int\limits_{\{|\varphi_t * u - u| \geq \alpha\}}
\big[ 1 + \abs{D(\varphi_t * u)}^p \big] \biggr)^s.
\end{multline*}
By the Chebyshev inequality and by Lemma~\ref{lemmaestimationconvolution},
\[
\begin{split}
\bigabs{\{|\varphi_t * u - u| \geq \alpha\}}
& \le
\frac{1}{\alpha^p} \int\limits_{\{|\varphi_t * u - u| \geq \alpha\}} |\varphi_t * u - u|^p\\
& \le 
{\Constant} t^{sp} \int\limits_{\{|\varphi_t * u - u| \geq \alpha\}} (D^{s, p} u)^p.
\end{split}
\]
By Lemma~\ref{lemmaestimationconvolution}, we also have
\[
\int\limits_{\{|\varphi_t * u - u| \geq \alpha\}}
\abs{D(\varphi_t * u)}^p
\le \frac{\Constant} {t^{(1 - s)p}} \int\limits_{\{|\varphi_t * u - u| \geq \alpha\}} (D^{s, p} u)^p.
\]
We conclude that
\[
\int\limits_{B_\alpha^\nu}
\|\underline{\kappa}_\xi \circ (\varphi_t * u)\|_{W^{s, p}(Q^m)}^p 
\dif\xi
\le {\Constant} (t^{sp} + 1) \int\limits_{\{|\varphi_t * u - u| \geq \alpha\}} (D^{s, p} u)^p.
\]
This proves the claim.
\end{proof}

%%%%%%%%%%%%%%%%%%%%%%%%%%%%%%%%%%%%%%%%%%%%%%%%%%%%%%%
%%%%%%%%%%%%%%%%%%%%%%%%%%%%%%%%%%%%%%%%%%%%%%%%%%%%%%%
%%%%%%%%%%%%%%%%%%%%%%%%%%%%%%%%%%%%%%%%%%%%%%%%%%%%%%%
%%%%%%%%%%%%%%%%%%%%%%%%%%%%%%%%%%%%%%%%%%%%%%%%%%%%%%%

\section{Strong density for \boldmath$sp < 1$}

The proof of Theorem~\ref{deuxiemetheorem} when \(sp < 1\) relies on the density of step functions in \(W^{s, p}\) based on a Haar projection~\cite{Bourgain-Brezis-Mironescu-2000}.
This analytical step is developped in Propositions~\ref{propositionEstimateELp} and~\ref{propositionWspConvergenceStepFunction} below.
Then, a standard tool from Differential topology (Proposition~\ref{propositionFiniteSet}) allows us to reduce the problem to an approximation of a map with values in a convex set and this can be carried out by convolution.

Given a function \(v \in L^1(Q^m ; \R^\nu)\), we consider the Haar projection \(E_j(v) : Q^m \to  \R^\nu\) defined almost everywhere on \(Q^{m}\).
More precisely, denoting by \(K^{m}_{2^{-j}}\) the standard cubication of \(Q^m\) in \(2^{jm}\) cubes of radius \(2^{-j}\), for every \(\sigma \in K^{m}_{2^{-j}}\) the function 
\(E_j(v)\) is constant in \(\Int{\sigma}\) and for \(x \in \Int{\sigma}\),
\[
  E_j(v)(x)= \frac{1}{|\sigma|} \int\limits_{\sigma} v.
\]
In particular, \(E_j(v)\) is a step function.

\begin{proposition}
\label{propositionEstimateELp}
Let \(v \in L^p(Q^m; \R^\nu)\).
Then, for every \(j \in \N_*\),
\[
\norm{E_j(v)}_{L^p(Q^m)} \leq \norm{v}_{L^p(Q^m)} 
\]
and the sequence \((E_j(v))_{j \in \N_*}\) converges strongly to \(v\) in \(L^p(Q^m; \R^\nu)\). 
\end{proposition}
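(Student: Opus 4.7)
The plan is to establish the two statements in order, using the $L^p$ bound to bootstrap the convergence from a dense class.

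First I will prove the norm inequality. Fix $j \in \N_*$ and a cube $\sigma \in K^m_{2^{-j}}$. On $\Int \sigma$, the value of $E_j(v)$ is the average $\frac{1}{|\sigma|} \int_\sigma v$, so applying Jensen's inequality componentwise (or directly to the convex function $|\cdot|^p$ on $\R^\nu$, which is legitimate since $p \ge 1$) yields
\[
  \bigabs{E_j(v)(x)}^p
  \le \frac{1}{|\sigma|} \int\limits_\sigma \abs{v}^p
  \quad \text{for a.e.\ \(x \in \Int \sigma\).}
\]
Integrating over $\sigma$ gives $\int_\sigma \abs{E_j(v)}^p \le \int_\sigma \abs{v}^p$, and summing over the finitely many $\sigma \in K^m_{2^{-j}}$ yields the desired inequality $\norm{E_j(v)}_{L^p(Q^m)} \le \norm{v}_{L^p(Q^m)}$.

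Next I will prove the $L^p$ convergence first for $v \in C(\overline Q^m; \R^\nu)$. Since $\overline Q^m$ is compact, such a $v$ is uniformly continuous; thus given $\eta > 0$, there exists $\delta > 0$ such that $\abs{v(x) - v(y)} < \eta$ whenever $\abs{x - y} < \delta$. For $j$ large enough that the diameter of each cube in $K^m_{2^{-j}}$ is less than $\delta$, we have $\abs{E_j(v)(x) - v(x)} \le \eta$ for a.e.\ $x \in Q^m$, so $E_j(v) \to v$ uniformly and hence in $L^p(Q^m; \R^\nu)$.

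Finally I will extend to arbitrary $v \in L^p(Q^m; \R^\nu)$ by the standard $\epsilon/3$ argument, using density of continuous functions. Given $\epsilon > 0$, pick $w \in C(\overline Q^m; \R^\nu)$ with $\norm{v - w}_{L^p(Q^m)} \le \epsilon/3$; by linearity of $E_j$ and the norm bound already proved, $\norm{E_j(v) - E_j(w)}_{L^p(Q^m)} = \norm{E_j(v - w)}_{L^p(Q^m)} \le \epsilon/3$, so
\[
  \norm{E_j(v) - v}_{L^p(Q^m)}
  \le \tfrac{2\epsilon}{3} + \norm{E_j(w) - w}_{L^p(Q^m)}.
\]
Taking $j$ large enough that the last term is below $\epsilon/3$, using the continuous case, finishes the proof. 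No step is really an obstacle here; the only subtle point is to remember that the norm inequality is essentially saying $E_j$ is a contraction (a conditional expectation), which is exactly what makes the $\epsilon/3$ approximation go through.
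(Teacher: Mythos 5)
Your proof is correct and follows essentially the same route as the paper: the contraction estimate via Jensen (the paper cites Hölder, which gives the same bound), convergence for continuous functions via uniform continuity, and extension to general $v\in L^p$ by density together with the contraction property. The paper merely compresses the last two steps into the single inequality $\norm{E_j(v)-v}_{L^p(Q^m)}^p \le \sum_{\sigma} \frac{1}{|\sigma|}\int_\sigma\int_\sigma |v(x)-v(y)|^p\,\mathrm{d}x\,\mathrm{d}y$ followed by an approximation of $v$ by a continuous function, which is exactly your $\epsilon/3$ argument in disguise.
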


\begin{proof}
The estimate follows from H\"older's inequality.
To prove the convergence of the sequence \((E_j(v))_{j \in \N_*}\), we write
\[
\begin{split}
\norm{E_j(v) - v}_{L^{p}(Q^m)}^p
& = \sum_{\sigma \in K^{m}_{2^{-j}}} \int\limits_{\sigma}\bigabs{v(x)-\frac{1}{|\sigma|}\int\limits_{\sigma}v}^p \dif x \\
& \leq \sum_{\sigma \in K^{m}_{2^{-j}}} \frac{1}{|\sigma|} \int\limits_{\sigma}\int\limits_{\sigma}\abs{v(x)-v(y)}^p \dif x \dif y.
\end{split}
\]
Approximating \(v\)  in \(L^p(Q^m; \R^\nu)\) by a continuous function, we deduce that the right-hand side converges to \(0\) as \(j\) tends to infinity.
This gives the conclusion.
\end{proof}

The counterpart of the previous proposition still holds in the case of fractional Sobolev spaces \(W^{s, p}\) for \(sp < 1\)
and is due to Bourgain, Brezis and Mironescu~\cite{Bourgain-Brezis-Mironescu-2000}*{Corollary~A.1}:

\begin{proposition}
\label{propositionWspConvergenceStepFunction}
Let \(v \in W^{s, p}(Q^m; \R^\nu)\). If \(sp < 1\), then for every \(j \in \N_*\),
\[
[E_j(v)]_{W^{s, p}(Q^m)} \leq C [v]_{W^{s, p}(Q^m)} 
\]
for some constant \(C > 0\) depending on \(s\), \(p\) and \(m\).
In addition, the sequence \((E_j(v))_{j \in \N_*}\) converges strongly to \(v\) in \(W^{s, p}(Q^m; \R^\nu)\). 
\end{proposition}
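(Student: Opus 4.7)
I would first establish the seminorm bound
\(
[E_j(v)]_{W^{s,p}(Q^m)} \le C [v]_{W^{s,p}(Q^m)}
\)
with a constant \(C\) independent of \(j\), and then deduce strong convergence by a standard density plus uniform-boundedness argument. Since \(E_j(v)\) is constant in the interior of each cube \(\sigma \in K^{m}_{2^{-j}}\) with value \(c_\sigma = \frac{1}{\abs{\sigma}}\int_\sigma v\), the contributions of the pairs with \(\sigma = \sigma'\) vanish and
\[
[E_j(v)]^p_{W^{s,p}(Q^m)} = \sum_{\substack{\sigma, \sigma' \in K^{m}_{2^{-j}} \\ \sigma \ne \sigma'}} \abs{c_\sigma - c_{\sigma'}}^p \int\limits_{\sigma}\int\limits_{\sigma'} \frac{\dif x \dif y}{\abs{x - y}^{m + sp}}.
\]
Writing \(c_\sigma - c_{\sigma'} = \frac{1}{\abs{\sigma}\abs{\sigma'}}\int_\sigma\int_{\sigma'}(v(\xi)-v(\eta)) \dif \xi \dif \eta\) and applying Jensen's inequality, the bound reduces to the geometric comparison
\[
\int\limits_{\sigma}\int\limits_{\sigma'} \frac{\dif x \dif y}{\abs{x - y}^{m + sp}} \le \frac{C \abs{\sigma}\abs{\sigma'}}{\abs{\xi - \eta}^{m + sp}} \qquad \text{for every } (\xi, \eta) \in \sigma \times \sigma',
\]
from which summation over pairs gives the stated bound since \(\sum_{\sigma \ne \sigma'}\int_{\sigma}\int_{\sigma'} \le \int_{Q^m}\int_{Q^m}\).

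When \(\dist(\sigma, \sigma') \ge 2^{-j}\), both \(\abs{x-y}\) and \(\abs{\xi-\eta}\) are comparable to \(\dist(\sigma, \sigma')\) and the comparison is straightforward. The main difficulty is the case of adjacent cubes, where the improper integral on the left converges precisely because \(sp < 1\): a direct computation using coordinates normal and tangential to the shared face shows that it is of order \((2^{-j})^{m-sp}\), which equals, up to a multiplicative constant, \(\abs{\sigma}\abs{\sigma'}/(2^{-j})^{m+sp}\). Since \(\abs{\xi - \eta} \le 3\sqrt{m}\, 2^{-j}\) for adjacent cubes, this last expression dominates \(\abs{\sigma}\abs{\sigma'}/\abs{\xi-\eta}^{m+sp}\) up to a multiplicative constant and the comparison follows. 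The hypothesis \(sp < 1\) is sharp: for \(sp \ge 1\) and \(v(x) = x_1\), the map \(E_j(v)\) has nontrivial jumps across the interior skeleton of the cubication and does not even belong to \(W^{s,p}(Q^m)\).

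For the strong convergence, the seminorm bound and Proposition~\ref{propositionEstimateELp} together show that the operators \((E_j)_{j \in \N_*}\) are uniformly bounded on \(W^{s,p}(Q^m; \R^\nu)\). By density of smooth maps in \(W^{s,p}(Q^m; \R^\nu)\), a three-term decomposition
\(
E_j(v) - v = E_j(v - \tilde v) + (E_j(\tilde v) - \tilde v) + (\tilde v - v)
\)
reduces the problem to showing \(E_j(\tilde v) \to \tilde v\) in \(W^{s,p}(Q^m; \R^\nu)\) for every smooth \(\tilde v\). For such \(\tilde v\), the pointwise estimate \(\norm{E_j(\tilde v) - \tilde v}_{L^\infty(Q^m)} \le \sqrt{m}\, 2^{-j} \norm{D\tilde v}_{L^\infty(Q^m)}\) combined with splitting the Gagliardo double integral of \(E_j(\tilde v) - \tilde v\) at the scale \(\abs{x - y} \sim 2^{-j}\) yields \([E_j(\tilde v) - \tilde v]_{W^{s,p}(Q^m)} \le C\, 2^{-j(1-s)} \norm{D\tilde v}_{L^\infty(Q^m)} \to 0\). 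The heart of the argument, and the main obstacle, is the geometric comparison for adjacent cubes: both the integrability of the singular kernel and the uniformity of the constant depend critically on the hypothesis \(sp < 1\).
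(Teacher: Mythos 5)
Your seminorm bound is essentially the paper's: both rest on the geometric comparison
\[
\int\limits_\sigma \int\limits_{\sigma'} \frac{\dif x \dif y}{\abs{x - y}^{m + sp}} \le C \,\frac{\abs{\sigma}\abs{\sigma'}}{\delta(\sigma,\sigma')^{m + sp}},
\]
where \(\delta\) is the sup of \(\abs{x-y}\) over \(\sigma\times\sigma'\); your version with ``for every \((\xi,\eta)\in\sigma\times\sigma'\)'' is equivalent, since \(\abs{\xi-\eta}\le\delta(\sigma,\sigma')\). The reduction via Jensen and summation is identical. One small difference: the paper proves the comparison for near cubes by a scaling argument using \(\chi_{Q^m}\in W^{s,p}(\R^m)\) (valid precisely because \(sp<1\)), which treats all nearby pairs at once. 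Your ``normal and tangential coordinates to the shared face'' computation only explicitly handles face-adjacent cubes; cubes sharing only an edge or vertex, or within distance \(2^{-j}\) without touching, should also be covered, and the scaling argument does this cleanly. This is an exposition gap rather than a mathematical one, and is worth filling.

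Where you genuinely diverge is the convergence. The paper splits \([E_j(v)-v]^p_{W^{s,p}(Q^m)}\) over \(D_\lambda=\{\abs{x-y}\le\lambda\}\) and its complement: the off-diagonal part is controlled by Proposition~\ref{propositionEstimateELp} (the \(L^p\) convergence), and the near-diagonal part by the pointwise estimate \eqref{eqCubesAverageWsp} combined with absolute continuity of the integral, sending first \(j\to\infty\) then \(\lambda\to0\). You instead use density of smooth maps, uniform boundedness of \(E_j\) on \(W^{s,p}\), and a quantitative rate \([E_j(\tilde v)-\tilde v]_{W^{s,p}(Q^m)}\le C\,2^{-j(1-s)}\norm{D\tilde v}_{L^\infty(Q^m)}\) for smooth \(\tilde v\). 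That rate is correct, but note that verifying it at scale \(\abs{x-y}<2^{-j}\) for \(x,y\) in different cubes again requires the geometric comparison (the jumps of \(E_j(\tilde v)\) across cube faces contribute, and their sum is controlled only because \(sp<1\)); you are not sidestepping the key analytic point, just reusing it. Both routes are valid. The paper's is more self-contained, since it does not invoke density of smooth maps in \(W^{s,p}(Q^m;\R^\nu)\) (which, while standard, is an extra ingredient); yours is more modular and yields an explicit rate for smooth data.

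Your closing remark on sharpness is correct: for \(sp\ge1\), a nontrivial jump across a hyperplane is not in \(W^{s,p}\), so \(E_j\) does not map \(W^{s,p}\) into itself.
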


The proof of Bourgain, Brezis and Mironescu is based on a characterization of the fractional Sobolev spaces \(W^{s, p}\) for \(sp < 1\) due to Bourdaud~\cite{Bourdaud-1995} in terms of the Haar basis.
We present an alternative argument relying directly on the Gagliardo seminorm.
The main ingredient is the following:

\begin{Claim}
If \(sp < 1\), then for every \(\sigma, \rho \in K^{m}_{2^{-j}}\),
\[
\int\limits_\sigma \int\limits_\rho \frac{1}{\abs{x - y}^{m + sp}} \dif x \dif y 
\le C' \frac{\abs{\sigma}\abs{\rho}}{\delta (\sigma, \rho)^{m + sp}},
\]
where
\[
\delta(\sigma, \rho)
= \sup{\bigl\{ \abs{x - y} : x \in \sigma\ \text{and}\ y \in \rho \bigr\}}
\]
and the constant \(C' > 0\) depends on \(m\) and \(sp\).
\end{Claim}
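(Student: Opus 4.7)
The plan is to exploit a separating hyperplane, after observing that only the case $\sigma \ne \rho$ needs to be addressed: in the application of the claim to Proposition~\ref{propositionWspConvergenceStepFunction}, the factor $\abs{E_j(v)(x) - E_j(v)(y)}^p$ vanishes when $x$ and $y$ lie in the same cube, so the diagonal term $\sigma = \rho$ contributes nothing. For distinct cubes in the cubication of side $h := 2^{-j}$, there exists a coordinate direction, which we may take to be $e_1$, such that the one-dimensional projections onto this axis are disjoint intervals $[a, a+h]$ and $[b, b+h]$; we may suppose $a \ge b + h$ and set $u_0 := a - b \ge h$.

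Writing $x = (x_1, x')$ and $y = (y_1, y')$ with $x', y' \in \R^{m-1}$, we have $\abs{x - y}^2 \ge (x_1 - y_1)^2$, and the change of variable $u = x_1 - y_1$, $v = y_1$ combined with Fubini yields
\[
\int_\sigma \int_\rho \frac{\dif x \dif y}{\abs{x - y}^{m + sp}}
= \int_{u_0 - h}^{u_0 + h} \bigl(h - \abs{u - u_0}\bigr) J(u) \dif u,
\]
where $J(u) := \int_{\sigma'} \int_{\rho'} (u^2 + \abs{x' - y'}^2)^{-(m + sp)/2} \dif x' \dif y'$ and $\sigma', \rho' \subset \R^{m-1}$ are the $(m-1)$-dimensional cubes obtained by projection.

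The main step is to control $J(u)$ in two regimes. For $u \ge h$, the elementary inequality $\abs{x - y}^2 \ge u^2$ immediately yields $J(u) \le h^{2(m-1)}/u^{m + sp}$. For $0 < u < h$, passing to polar coordinates in $\R^{m - 1}$ around $x'$ and substituting $r = u t$ produces
\[
J(u) \le C h^{m - 1} u^{-1 - sp} \int_0^{\infty} \frac{t^{m - 2}}{(1 + t^2)^{(m + sp)/2}} \dif t,
\]
with a finite last integral since $(m + sp) - (m - 2) = 2 + sp > 1$. Inserting these estimates into the Fubini formula: in the adjacent case $u_0 = h$ the contribution on $[0, h]$ is of order $h^{m-1} \int_0^h u^{-sp}\dif u$, where the hypothesis $sp < 1$ is essential to ensure convergence and produces a bound of order $h^{m - sp}$, while the range $[h, 2h]$ is treated by the second bound; in the non-adjacent case $u_0 \ge 2h$ only the second bound is needed and yields a bound of order $h^{2m}/u_0^{m + sp}$. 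Since $\delta(\sigma, \rho)$ is comparable to $\max(u_0, h)$ and $\abs{\sigma} \abs{\rho} = h^{2m}$, in either case the bound translates to a constant multiple of $\abs{\sigma}\abs{\rho}/\delta(\sigma, \rho)^{m + sp}$.

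The main obstacle is the estimate of $J(u)$ for $u$ close to $0$, which corresponds to pairs $(x, y)$ of points near the shared face of adjacent cubes. The factor $u^{m-2}$ generated by polar coordinates in the $(m-1)$ transverse directions, combined with the hypothesis $sp < 1$, is precisely what makes the final integral $\int_0^h u^{-sp} \dif u$ convergent.
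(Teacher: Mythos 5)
Your proof proceeds by a genuinely different route from the paper. The paper splits according to whether $\delta(\sigma,\rho)$ is large or small relative to $2^{-j}$: in the first regime $\abs{x-y}$ is comparable to $\delta(\sigma,\rho)$ pointwise, and in the second regime it invokes the known fact that $\chi_{Q^m} \in W^{s,p}(\R^m)$ when $sp<1$ together with scaling, the point being that for $\sigma \ne \rho$ the quantity $\int_\sigma\int_\rho \abs{x-y}^{-(m+sp)}$ is controlled by $[\chi_\sigma]_{W^{s,p}}^p$. Your argument instead isolates a separating coordinate and performs a direct one-dimensional Fubini computation; it is more elementary and self-contained, at the cost of being longer. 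Your preliminary remark that only $\sigma \ne \rho$ matters is apt and in fact sharpens the paper, which implicitly excludes the diagonal: for $\sigma = \rho$ and $sp>0$ the left-hand side is $+\infty$, so the claim as stated cannot hold for all pairs, but in the application the factor $\abs{E_j(v)(x)-E_j(v)(y)}$ kills those terms.

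There is, however, a genuine gap in the final step. You pick \emph{some} coordinate direction $e_1$ in which the one-dimensional projections of $\sigma$ and $\rho$ are separated, and then assert that $\delta(\sigma,\rho)$ is comparable to $\max(u_0,h)=u_0$. This is false for an arbitrary separating direction: if $\sigma$ and $\rho$ are only one cube apart in the chosen direction $e_1$ but many cubes apart in some other direction $e_2$, then $u_0 = 2h$ while $\delta(\sigma,\rho)$ can be arbitrarily large compared to $h$; your bound $C\,h^{2m}/u_0^{m+sp}$ is then much weaker than the desired $C'\,h^{2m}/\delta(\sigma,\rho)^{m+sp}$, and the conclusion does not follow. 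The fix is to choose $e_1$ as the direction that \emph{maximizes} the separation $\abs{a_i - b_i}$ of the lower corners (any such direction automatically has separated projections). With that choice one has $u_0 + h \le \delta(\sigma,\rho) \le \sqrt{m}\,(u_0+h)$, so $\delta(\sigma,\rho)$ is indeed comparable to $u_0$ and the rest of your computation goes through. You should state this choice explicitly. (A very minor point: the computation of $J(u)$ via polar coordinates in $\R^{m-1}$ presupposes $m\ge 2$; for $m=1$ there are no transverse variables and $J(u)=u^{-1-sp}$ directly, which is consistent with your formula.)
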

\begin{proof}[Proof of the claim] For every \((x, y) \in \sigma\times \rho\), 
\[
\abs{x-y}\geq \delta(\sigma, \rho) -\textrm{ diam } \sigma -\textrm{ diam } \rho =  \delta(\sigma, \rho) - 2^{-j+2}\sqrt{m}.
\]
If \(\delta(\sigma, \rho) \geq 2^{-j+3} \sqrt{m} \), then \(\frac{1}{2} \delta(\sigma, \rho) \leq |x-y| \leq \delta(\sigma, \rho)\) and the result follows in this case. 
Since the indicator function of the unit cube \(\chi_{Q^m}\) belongs to \( W^{s,p}(\R^m)\) for \(sp< 1\), a scaling argument leads to the following estimate
\[
\frac{1}{\abs{\sigma}\abs{\rho}}\int\limits_\sigma \int\limits_\rho \frac{1}{\abs{x - y}^{m + sp}} \dif x \dif y  \le \NewConstant 2^{j(m+sp)}.
\]
In turn, this implies the claim when  \(\delta(\sigma, \rho) < 2^{-j+3} \sqrt{m} \).
\end{proof}

\begin{proof}[Proof of Proposition~\ref{propositionWspConvergenceStepFunction}]
Let \(\sigma, \rho \in K^{m}_{2^{-j}}\).
For \(x \in \sigma\) and \(y \in \rho\),
\[
\abs{E_j (v) (x) - E_j (v)(y)}
 \le \frac{1}{\abs{\sigma}\abs{\rho}} \int\limits_{\sigma}\int\limits_{\rho} \abs{v(\tilde x) - v(\tilde y)} \dif \tilde x \dif \tilde y.
\]
Thus, by Jensen's inequality,
\[
\abs{E_j (v) (x) - E_j (v)(y)}^p
 \le \frac{1}{\abs{\sigma}\abs{\rho}} \int\limits_{\sigma}\int\limits_{\rho} \abs{v(\tilde x) - v(\tilde y)}^p \dif \tilde x \dif \tilde y.
\]
We deduce that
\begin{equation}
\label{eqCubesAverageWsp}
\begin{split}
\int\limits_\sigma \int\limits_\rho \frac{\abs{E_j (v) (x) - E_j (v)(y)}^p}{\abs{x - y}^{m + sp}} \dif x \dif y
& \le \frac{C'}{\delta (\sigma, \rho)^{m + sp}} \int\limits_{\sigma}\int\limits_{\rho} \abs{v(\tilde x) - v(\tilde y)}^p \dif \tilde x \dif \tilde y\\
& \le C' \int\limits_{\sigma} \int\limits_{\rho}  \frac{\abs{v (\tilde x) - v (\tilde y)}^{p}}{\abs{x - y}^{m + sp}} \dif \tilde x \dif \tilde y.
\end{split}
\end{equation}
The desired estimate follows from \eqref{eqCubesAverageWsp} by summation over dyadic cubes in \(K^m_{2^{-j}}\).

To prove the convergence in \(W^{s, p}\) we write for every \(\lambda > 0\), 
\begin{multline*}
 [E_j (v) - v]^p_{W^{s, p}(Q^m)}\\
 \le 2^{p - 1} \iint\limits_{D_\lambda} \frac{\abs{E_j (v) (x) - E_j (v) (y)}^p + \abs{v (x) - v (y)}^p}{\abs{x -y}^{m + s p}} \dif x \dif y\\
 + \frac{2^{p} \abs{Q^m}}{\lambda^{m + s p}} \int\limits_{Q^m} \abs{E_j (v) - v}^p,
\end{multline*}
where
\[
 D_\lambda = \bigl\{ (x, y) \in Q^m \times Q^m : \abs{x - y}\le \lambda \bigr\}.
\]
By estimate \eqref{eqCubesAverageWsp},
\[
\begin{split}
\iint\limits_{D_\lambda} \frac{\abs{E_j (v) (x) - E_j (v) (y)}^p}{\abs{x -y}^{m + s p}} \dif x \dif y
 &\le \NewConstant \sum_{\substack{\sigma, \rho \in K^m_{2^{-j}}\\
 (\sigma \times \rho) \cap D_\lambda \ne \emptyset}} \int\limits_{\sigma} \int\limits_{\rho}  \frac{\abs{v (x) - v (y)}^{p}}{\abs{x - y}^{m + sp}} \dif x \dif y\\
 & \le \SameConstant \iint\limits_{D_{\lambda} + Q^{2m}_{2^{-j+1}}}  \frac{\abs{v (x) - v (y)}^{p}}{\abs{x - y}^{m + sp}} \dif x \dif y.
\end{split}
\]
Hence,
\begin{multline*}
 [E_j (v) - v]^p_{W^{s, p}(Q^m)}\\
 \le \Constant \iint\limits_{D_{\lambda} + Q^{2m}_{2^{-j+1}}} \frac{\abs{v (x) - v (y)}^p}{\abs{x -y}^{m + s p}} \dif x \dif y
 + \frac{2^{p} \abs{Q^m}}{\lambda^{m + s p}} \int\limits_{Q^m} \abs{E_j (v) - v}^p.
\end{multline*}
By Proposition~\ref{propositionEstimateELp}  the last integral tends to zero as \(j\) tends to infinity.
Thus,
\[
\limsup_{j \to \infty}{[E_j (v) - v]^p_{W^{s, p}(Q^m)}}
\le \SameConstant \iint\limits_{D_{\lambda}} \frac{\abs{v (x) - v (y)}^p}{\abs{x -y}^{m + s p}} \dif x \dif y.
\]
The conclusion follows by choosing \(\lambda > 0\) small enough.
\end{proof}

In the proof of Theorem~\ref{deuxiemetheorem} we need the following property from Differential topology:

\begin{proposition}
\label{propositionFiniteSet}
Let \(N^n\) be a connected manifold.
Then, for every finite subset \(A\) in \(N^n\), there exists an open neighborhood of \(A\) in \(N^n\) which is diffeomorphic to the Euclidean ball \(B^n\).
\end{proposition}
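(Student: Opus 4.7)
The plan is to realize the finite set $A$ as a subset of the image of a smoothly embedded compact arc in $N^n$, and then take an open tubular neighborhood of that arc, which will be diffeomorphic to an open ball.

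When $n = 1$, a connected smooth $1$-manifold is diffeomorphic either to $\R$ (and then $N^n$ itself does the job) or to $\S^1$; in the latter case, since $A$ is finite, removing any point of $\S^1 \setminus A$ yields an open subset containing $A$ that is diffeomorphic to $\R$, hence to $B^1$. Assume henceforth $n \ge 2$ and enumerate $A = \{a_1, \dots, a_k\}$. Since $N^n$ is path connected and smooth paths are dense in continuous paths (via local charts and partitions of unity), I would inductively construct smooth embedded arcs $\sigma_i \colon [0,1] \to N^n$ with $\sigma_i(0) = a_i$, $\sigma_i(1) = a_{i+1}$, and $\sigma_i((0,1]) \cap (\sigma_1([0,1]) \cup \cdots \cup \sigma_{i-1}([0,1])) = \emptyset$. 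At each step the set $\Gamma_{i-1} = \sigma_1([0,1]) \cup \cdots \cup \sigma_{i-1}([0,1])$ is compact and of dimension $1$, hence of codimension at least one in $N^n$; using a chart around $a_i$ I choose an initial direction transverse to $\sigma_{i-1}$ at $a_i$, then continue the path in the open set $N^n \setminus \Gamma_{i-1}$ until it enters a chart around $a_{i+1}$, where I connect to $a_{i+1}$ from a suitable direction. A generic smooth perturbation makes this concatenated path an embedding when $n \ge 2$.

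After smoothing corners at the $a_i$ so that consecutive arcs join $C^\infty$-smoothly, the concatenation yields a smooth embedding $\gamma \colon [0,1] \to N^n$ whose image $\Gamma$ contains $A$. Extending $\gamma$ slightly past its endpoints to an embedding $\tilde{\gamma} \colon (-\delta, 1+\delta) \to N^n$ and invoking the tubular neighborhood theorem, I obtain an open neighborhood $V$ of $\Gamma$ in $N^n$ diffeomorphic to the total space of the normal bundle of $\tilde{\gamma}((-\delta,1+\delta))$. Since $(-\delta, 1+\delta)$ is contractible, this bundle is trivial, so $V$ is diffeomorphic to $(-\delta, 1+\delta) \times B^{n-1}$, hence to $\R^n$, hence to the open ball $B^n$.

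The main obstacle is the inductive construction of the embedded arc, specifically ensuring that successive arcs can be joined into a single smooth embedding. Corner smoothing at the nodes $a_i$ is routine (bend the curve inside a chart). The delicate point is the case $n = 2$, where $\Gamma_{i-1}$ has codimension one and may locally separate $N^n$; however, since $a_i$ and $a_{i+1}$ lie on $\Gamma_{i-1}$ only at their endpoints (if at all), one can still leave $a_i$ along a direction transverse to $\sigma_{i-1}$ into an open half-neighborhood, and reach any prescribed chart around $a_{i+1}$ by staying in the open dense complement $N^n \setminus \Gamma_{i-1}$, which is locally path connected at every point of $N^n$.
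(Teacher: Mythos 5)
Your approach is genuinely different from the paper's, and it is correct in outline. The paper's proof is a two-line application of homogeneity: it fixes an open set $U\subset N^n$ diffeomorphic to $B^n$, invokes the fact (Hirsch, \emph{Differential Topology}, Lemma~5.2.6; for $n\ge 2$ this is the multi-transitivity of $\mathrm{Diff}(N^n)$, cf.\ Banyaga) that there is a diffeomorphism $f\colon N^n\to N^n$ carrying $A$ into $U$, and takes $f^{-1}(U)$. Your proof instead threads an embedded arc through $A$ and takes an open tubular neighborhood of its image; since the arc is contractible the normal bundle is trivial, giving a neighborhood diffeomorphic to $\R^n\simeq B^n$. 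Both routes are legitimate; the paper's is shorter because it outsources the work to a classical lemma, while yours is more constructive and self-contained modulo the tubular neighborhood theorem. Incidentally, one could also merge the two ideas: the standard proof that $\mathrm{Diff}$ acts transitively already produces, for any two points of a connected manifold, an embedded arc between them, and iterating this is essentially your induction.

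There is one inaccuracy worth flagging. The sentence ``a generic smooth perturbation makes this concatenated path an embedding when $n\ge 2$'' is false for $n=2$: a generic smooth map $[0,1]\to M^2$ is an immersion with transverse double points, not an embedding, because $2\cdot 1$ is not less than $\dim M$. You do acknowledge that $n=2$ is delicate, but the fix you offer (that $N^n\setminus\Gamma_{i-1}$ is ``locally path connected'') is not the relevant property; what you actually need is that $N^n\setminus\Gamma_{i-1}$ is \emph{globally} connected, which holds because $\Gamma_{i-1}$ is an embedded tree (no cycles) and an embedded tree never separates a connected surface, together with the standard fact that any two points of a connected open surface can be joined by a smoothly embedded arc (removing self-intersections by the usual shortcut argument, rather than by genericity). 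With these two classical facts substituted in place of ``generic perturbation,'' your argument goes through in every dimension $n\ge 2$, and your separate treatment of $n=1$ is fine.
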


\begin{proof}
Let \(U \subset N^n\) be an open set which is diffeomorphic to the Euclidean ball \(B^n\).
There exists a diffeomorphism \(f : N^n \to N^n\) mapping \(A\) into \(U\) \cite{Hirsch}*{Lemma~5.2.6}; in dimension \(n \ge 2\) this follows from the multi-transitivity in the group of diffeomorphism of \(N^n\) \cite{Banyaga}*{Lemma~2.1.10}.
The set \(f^{-1}(U)\) is thus diffeomorphic to \(B^n\) and contains \(A\).
\end{proof}

\begin{proof}[Proof of Theorem~\ref{deuxiemetheorem} when \(sp < 1\)]
Let \(u\in W^{s,p}(Q^m ; N^n)\) and let \(\iota > 0\) be such that the nearest point projection \(\Pi\) into \(N^n\) is smooth on \(N^n + \overline{B}_\iota^\nu\).

Let \(b \in N^n\). 
For every \(j \in \N_*\), we define \(u_j : Q^m \to \R^\nu\) for \(x \in Q^m\) by
\[
u_j(x)=
\begin{cases}
E_j(u)(x)	& \text{if \(\dist{(E_j(u)(x) , N^n)} < \iota\),}\\
b	& \text{otherwise.}
\end{cases}
\]
Then, \((u_j)_{j \in \N_*}\) is a sequence of step functions with values into \(N^n+ B^{\nu}_{\iota}\).
By the triangle inequality,
\begin{equation}
\label{eqTriangleInequalityspless1}
\norm{u_j - u}_{W^{s, p}(Q^m)} 
 \le \norm{E_j(u) - u_j}_{W^{s, p}(Q^m)} + \norm{E_j(u) - u}_{W^{s, p}(Q^m)}.
\end{equation}
We need to estimate the first term in the right hand side of this inequality.
Since the range of \(E_j(u)\) is contained in a fixed bounded set --- for instance the convex hull of \(N^n\) ---, for every \(j\in \N_*\),
\[
\begin{split}
\norm{E_j(u) - u_j}_{L^p(Q^m)} 
& = \norm{E_j(u) - b}_{L^{p}(\{\dist{(E_j(u), N^n)} \geq \iota\})}\\
& \leq \NewConstant |\{x : \dist{(E_j(u)(x), N^n)} \geq \iota\}|^{\frac{1}{p}}.
\end{split}
\]
Since \(\abs{E_j(u)(x) - u(x)}\geq \iota\) on \(\{x : \dist{(E_j(u)(x), N^n)} \geq \iota\}\), we get
\[
\norm{E_j(u) - u_j}_{L^p(Q^m)} 
 \leq {\SameConstant} |\{x : \abs{E_j(u)(x) - u(x)} \geq \iota\}|^{\frac{1}{p}}.
\]
Thus, by the Chebyshev inequality,
\begin{equation}
\label{equationlpujej}
\norm{E_j(u) - u_j}_{L^p(Q^m)} 
 \leq \frac{\SameConstant}{\iota^\frac{1}{p}} \norm{E_j(u) - u}_{L^p(Q^m)}.
\end{equation}

We need a similar estimate for the Gagliardo seminorm \(W^{s, p}\):
\begin{Claim}
%\label{cvgceujej}
There exists \(C>0\) depending on \(s\), \(p\) and \(m\) such that for every \(j \in \N_*\)
\[
[E_j(u) - u_j]_{W^{s,p}(Q^m)} \le C \big( [E_j(u) - u]_{W^{s,p}(Q^m)} + [u]_{W^{s,p}(A_j)} \big),
\]
where \(A_j= \big\{ x\in Q^m : \dist{(E_j(u)(x) , N^n)} \ge \iota \big\}\).
\end{Claim}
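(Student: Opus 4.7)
The function $w_j := E_j(u) - u_j$ vanishes on $Q^m \setminus A_j$ and equals $E_j(u) - b$ on $A_j$. The key structural observation I will exploit is that $E_j(u)$ is constant on each cube of the cubication $K^m_{2^{-j}}$, so $A_j$ agrees, up to a null set, with a union of such cubes. The plan is to split the Gagliardo double integral $[w_j]^p_{W^{s,p}(Q^m)}$ into a diagonal contribution over $A_j \times A_j$ and an off-diagonal contribution over $A_j \times A_j^c$ (twice, by symmetry).

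For the diagonal part, the integrand reduces to $\abs{E_j(u)(x) - E_j(u)(y)}^p / \abs{x-y}^{m+sp}$. Writing $E_j(u) = (E_j(u) - u) + u$ and applying the triangle inequality for the $W^{s,p}(A_j)$ seminorm, this is controlled by a constant times $[E_j(u) - u]_{W^{s,p}(Q^m)}^p + [u]_{W^{s,p}(A_j)}^p$, matching the right-hand side of the claim.

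The off-diagonal piece is the main difficulty. Since $N^n$ is compact, $\abs{E_j(u)(x) - b}$ is bounded by a constant $M$ depending only on $N^n$ and $b$. Decomposing $A_j$ as a union of cubes $\sigma$, I will reduce to controlling $\sum_{\sigma \subset A_j} \int_\sigma \int_{A_j^c} \abs{x-y}^{-m-sp} \dif x \dif y$. Applying the sub-claim used in the proof of Proposition~\ref{propositionWspConvergenceStepFunction} to each pair of cubes $\sigma \times \rho$ with $\rho \subset A_j^c$ and summing over $\rho$ by a standard dimension count (there are $O(k^{m-1})$ cubes at combinatorial distance $k$ from $\sigma$, with $\delta(\sigma, \rho) \gtrsim k \cdot 2^{-j}$; the resulting series converges since $sp > 0$) yields
\[
\int\limits_\sigma \int\limits_{A_j^c} \frac{1}{\abs{x-y}^{m+sp}} \dif x \dif y \le C \, 2^{-j(m-sp)}
\]
for each $\sigma \subset A_j$.

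The closing step, which I expect to be the main obstacle, is a quantitative lower bound that absorbs $2^{-j(m-sp)}$ into $[u]^p_{W^{s,p}(\sigma)}$. Since $u(y) \in N^n$ for a.e.\ $y$ and the value $\bar u_\sigma$ of $E_j(u)$ on $\sigma$ satisfies $\dist{(\bar u_\sigma, N^n)} \ge \iota$, one has $\abs{\bar u_\sigma - u(y)} \ge \iota$ for a.e.\ $y \in \sigma$. Expressing $\bar u_\sigma - u(y)$ as the average over $z \in \sigma$ of $u(z) - u(y)$ and applying Jensen's inequality yields $\iota^p \abs{\sigma}^2 \le \int_\sigma \int_\sigma \abs{u(z) - u(y)}^p \dif z \dif y$; bounding $\abs{z-y}$ by the diameter of $\sigma$ converts this into $c\, \iota^p \, 2^{-j(m-sp)} \le [u]^p_{W^{s,p}(\sigma)}$, matching the previous upper bound term by term. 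Summing over the disjoint cubes $\sigma \subset A_j$ and using $\sum_\sigma [u]^p_{W^{s,p}(\sigma)} \le [u]^p_{W^{s,p}(A_j)}$ closes the argument. This absorption step is where the hypothesis $sp < 1$ (through the sub-claim) and the positive tubular thickness $\iota$ combine to produce a constant of the correct form.
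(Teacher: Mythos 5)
Your proof is correct. The block decomposition is the same as the paper's --- the double integral is split into a sum over pairs of dyadic cubes both inside $A_j$ (where the integrand is $\abs{E_j(u)(x)-E_j(u)(y)}^p\,\abs{x-y}^{-m-sp}$) and a sum over mixed pairs (where it is $\abs{E_j(u)(x)-b}^p\,\abs{x-y}^{-m-sp}$) --- and both diagonal estimates land in the same place. The genuine divergence is in the mixed piece. The paper retains the dependence on the outer cube $\rho$: it lower-bounds $\abs{\sigma}$ by $\iota^{-p}\int_\sigma \abs{E_j(u)-u}^p$, then inserts the zero-average identity $\frac{1}{\abs{\rho}}\int_\rho(E_j(u)-u)=0$ to turn $\abs{E_j(u)(x)-u(x)}^p$ into an averaged double difference over $\sigma\times\rho$, and finally dominates $\delta(\sigma,\rho)^{-(m+sp)}$ by $\abs{x-y}^{-(m+sp)}$ so as to reassemble the full seminorm $[E_j(u)-u]^p_{W^{s,p}(Q^m)}$. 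You instead sum the kernel over $\rho$ first, obtaining $\int_\sigma\int_{A_j^c}\abs{x-y}^{-(m+sp)}\dif x\dif y\le C\,2^{-j(m-sp)}$ (the series $\sum_k k^{-1-sp}$ converges because $sp>0$, while the near-neighbor cubes are covered by the sub-claim, which is where $sp<1$ enters), and then absorb $2^{-j(m-sp)}$ through the local lower bound $\iota^p\,2^{-j(m-sp)}\le C\,[u]^p_{W^{s,p}(\sigma)}$, which tests the $\iota$-separation of $\bar u_\sigma$ from $N^n$ directly against $u$ rather than against $E_j(u)-u$. Summing over the disjoint cubes $\sigma\subset A_j$ thus allots the mixed piece to $[u]^p_{W^{s,p}(A_j)}$, whereas the paper allots it to $[E_j(u)-u]^p_{W^{s,p}(Q^m)}$. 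Both routes are valid; your absorption step is purely local and avoids the zero-average trick and the second application of Jensen's inequality, at the cost of the dimension-counting series in its place.
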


\begin{proof}[Proof of the claim]
First note that 
\begin{multline*}
[E_j (u) - u_j]_{W^{s, p}(Q^m)}^p 
= 2 \sum_{\sigma \in \mathcal{A}} \sum_{\rho \in K^m_{2^{-j}} \setminus \mathcal{A}}
\;  \int\limits_{\sigma}\int\limits_{\rho} \frac{\abs{E_j (u) (x) - b}^p}{\abs{x - y}^{m + s p}}\dif x \dif y\\
+ \sum_{\sigma \in \mathcal{A}}\sum_{\rho \in \mathcal{A}}  \; \int\limits_{\sigma}\int\limits_{\rho} \frac{\abs{E_j (u) (x) - E_j (u) (y)}^p}{\abs{x - y}^{m + s p}}\dif x \dif y,
\end{multline*}
where 
\[
\mathcal{A} = \Bigl\{\sigma \in K^m_{2^{-j}} : \dist{(E_j(u)(x) , N^n)} \ge \iota \ \text{for}\ x \in \sigma \Bigr\} .
\] 
By \eqref{eqCubesAverageWsp}, we have
\[
\sum_{\sigma \in \mathcal{A}} \sum_{\rho \in \mathcal{A}}  \; \int\limits_{\sigma}\int\limits_{\rho} \frac{\abs{E_j (u) (x) - E_j (u) (y)}^p}{\abs{x - y}^{m + s p}}\dif x \dif y \leq \NewConstant [u]_{W^{s,p}(A_j)}^p.
\]
We now estimate the term
\[
I=\sum_{\sigma \in \mathcal{A}} \sum_{\rho \in K^m_{2^{-j}} \setminus \mathcal{A}}  \; 
  \int\limits_{\sigma}\int\limits_{\rho}  \frac{\abs{E_j (u) (x) - b}^p}{\abs{x - y}^{m + s p}}\dif x \dif y.
\]
Since the image of \(u\) is contained in \(N^n\) and \(N^n\) is bounded, there exists a constant \(\Constant > 0\) such that for every \(j \in \N_*\),
\[
\abs{E_j(u) - b} \le \SameConstant.
\] 
Since \(sp < 1\), by the Claim following Proposition~\ref{propositionWspConvergenceStepFunction},
\[
\begin{split}
I
 & \le  {\SameConstant^p} \sum_{\sigma \in \mathcal{A}} \sum_{\rho \in K^m_{2^{-j}} \setminus \mathcal{A}}
 \;   \int\limits_{\sigma}\int\limits_{\rho} \frac{1}{\abs{x - y}^{m + s p}} \dif x \dif y\\
 & \le \Constant \sum_{\sigma \in \mathcal{A}} \sum_{\rho \in K^m_{2^{-j}} \setminus \mathcal{A}}
  \frac{\abs{\sigma}\abs{\rho}}{\delta (\sigma, \rho)^{m + s p}}.
\end{split}
\]
For every \(\sigma \in \mathcal{A}\), 
\[
\int\limits_{\sigma} \abs{E_j (u) - u }^p \ge \iota^p \abs{\sigma}.
\]
Thus,
\[
I \le \frac{\SameConstant}{\iota^p} \sum_{\sigma \in \mathcal{A}} \sum_{\rho \in K^m_{2^{-j}} \setminus \mathcal{A}} \frac{\abs{\rho}}{\delta (\sigma, \rho)^{m + s p}}
 \int\limits_{\sigma} \abs{E_j (u) - u }^p.
\]
Since \(E_j(u) = \frac{1}{\abs{\rho}} \int_\rho u\) in \(\rho\), for \(x \in \sigma\) we have by the triangle inequality,
\[
\abs{E_j(u)(x) - u(x)} \le \frac{1}{\abs{\rho}} \int\limits_\rho \abs{E_j (u) (x) - u (x) - E_j (u) (y) + u(y)} \dif y.
\]
Thus, by Jensen's inequality, 
\[
\abs{E_j(u)(x) - u(x)}^p \le \frac{1}{\abs{\rho}} \int\limits_\rho \abs{E_j (u) (x) - u (x) - E_j (u) (y) + u(y)}^p \dif y.
\]
We deduce that 
\[
I 
\le \frac{\SameConstant}{\iota^p} \sum_{\sigma \in \mathcal{A}} \sum_{\rho \in K^m_{2^{-j}} \setminus \mathcal{A}} 
 \;  \int\limits_{\sigma} \int\limits_{\rho} \frac{\abs{E_j (u) (x) - u (x) -  E_j (u) (y) +u (y)}^p}{\abs{x - y}^{m + sp}} \dif y \dif x
\]
and the claim follows.
\end{proof}

By the triangle inequality \eqref{eqTriangleInequalityspless1}, by estimate \eqref{equationlpujej} and by the previous claim, we have for every \(j \in \N_*\),
\[
\norm{u_j - u}_{W^{s, p}(Q^m)} 
 \le {\Constant} \norm{E_j(u) - u}_{W^{s, p}(Q^m)} + C [u]_{W^{s,p}(A_j)}.
\]
Since \((E_j(u))_{j\in \N_*}\) converges to \(u\) in measure and \(u(x) \in N^n\) for a.e.~\(x \in Q^m\), the sequence \((|A_j|)_{j \in \N_*}\) converges to zero.
Since \(u \in W^{s,p}(Q^m)\), by the Dominated convergence theorem we get 
\[
\lim_{j\to +\infty}{[u]_{W^{s,p}(A_j)}}
=0.
\]
Applying Proposition~\ref{propositionWspConvergenceStepFunction}, we deduce that \((u_j)_{j \in \N_*}\) converges strongly to \(u\) in \(W^{s, p}(Q^m; \R^\nu)\).
Since \(u_j(Q^m) \subset N^n +\overline{B}_\iota^\nu\), the sequence \((\Pi \circ u_j)_{j \in \N_*}\) converges strongly to \(u\)  in \(W^{s, p}(Q^m; N^n)\).

To conclude the proof of Theorem~\ref{deuxiemetheorem}, we may then assume that \(u\) is a step function.
In this case, \(u(Q^m)\) is a finite set of points in \(N^n\). 
By Proposition~\ref{propositionFiniteSet}, there exists an open neighborhood \(U\) of \(u(Q^m)\) in \(N^n\) and a smooth diffeomorhism \(\Phi : \overline{U} \to \overline{B}^n\). Since the set \(\overline{B}^n\) is convex, there exists a sequence of smooth maps \((v_i)_{i \in \N}\) in \(C^\infty(\overline{Q}^m; \overline{B}^n)\) which converges strongly to \(\Phi \circ u\) in \(W^{s,p}(Q^m ; \overline{B}^n)\). 
Hence, the sequence \((\Phi^{-1} \circ v_i)_{i \in \N}\) converges strongly to \(u\) in \(W^{s,p}(Q^m ; N^n)\). 
This completes the proof of Theorem~\ref{deuxiemetheorem} for \(sp < 1\).  
\end{proof}

%%%%%%%%%%%%%%%%%%%%%%%%%%%%%%%%%%%%%%%%%%%%%%%%%%%%%%%
%%%%%%%%%%%%%%%%%%%%%%%%%%%%%%%%%%%%%%%%%%%%%%%%%%%%%%%
%%%%%%%%%%%%%%%%%%%%%%%%%%%%%%%%%%%%%%%%%%%%%%%%%%%%%%%
%%%%%%%%%%%%%%%%%%%%%%%%%%%%%%%%%%%%%%%%%%%%%%%%%%%%%%%

\subsection*{Acknowledgments}

The second (ACP) and third (JVS) authors were supported by the Fonds de la Recherche scientifique---FNRS.

%%%%%%%%%%%%%%%%%%%%%%%%%%%%%%%%%%%%%%%%%%%%%%%%%%%%%%%
%%%%%%%%%%%%%%%%%%%%%%%%%%%%%%%%%%%%%%%%%%%%%%%%%%%%%%%
%%%%%%%%%%%%%%%%%%%%%%%%%%%%%%%%%%%%%%%%%%%%%%%%%%%%%%%
%%%%%%%%%%%%%%%%%%%%%%%%%%%%%%%%%%%%%%%%%%%%%%%%%%%%%%%

\begin{bibdiv}

\begin{biblist}

\bib{Banyaga}{book}{
   author={Banyaga, Augustin},
   title={The structure of classical diffeomorphism groups},
   series={Mathematics and its Applications},
   volume={400},
   publisher={Kluwer Academic Publishers Group},
   place={Dordrecht},
   date={1997},
%   pages={xii+197},
%   isbn={0-7923-4475-8},
%   review={\MR{1445290 (98h:22024)}},
}

\bib{Bethuel}{article}{
   author={Bethuel, Fabrice},
   title={The approximation problem for Sobolev maps between two manifolds},
   journal={Acta Math.},
   volume={167},
   date={1991},
%   number={3-4},
   pages={153--206},
   issn={0001-5962},
}

\bib{Bethuel-1995}{article}{
   author={Bethuel, Fabrice},
   title={Approximations in trace spaces defined between manifolds},
   journal={Nonlinear Anal.},
   volume={24},
   date={1995},
%   number={1},
   pages={121--130},
%   issn={0362-546X},
%   review={\MR{1308474 (95k:58020)}},
%   doi={10.1016/0362-546X(93)E0025-X},
}

\bib{Bourgain-Brezis-Mironescu-2000}{article}{
   author={Bourgain, Jean},
   author={Brezis, Ha{\"{\i}}m},
   author={Mironescu, Petru},
   title={Lifting in Sobolev spaces},
   journal={J. Anal. Math.},
   volume={80},
   date={2000},
   pages={37--86},
   %issn={0021-7670},
   %review={\MR{1771523 (2001h:46044)}},
   %doi={10.1007/BF02791533},
}

\bib{Bourgain-Brezis-Mironescu-2004}{article}{
   author={Bourgain, Jean},
   author={Brezis, Ha{\"{\i}}m},
   author={Mironescu, Petru},
   title={$H^{1/2}$ maps with values into the circle: minimal
   connections, lifting, and the Ginzburg-Landau equation},
   journal={Publ. Math. Inst. Hautes \'Etudes Sci.},
%   number={99},
   date={2004},
   pages={1--115},
%   issn={0073-8301},
%   review={\MR{2075883 (2005k:58017)}},
%   doi={10.1007/s10240-004-0019-5},
}

\bib{Bousquet-Ponce-VanSchaftingen-2013}{article}{
   author={Bousquet, Pierre},
   author={Ponce, Augusto C.},
   author={Van Schaftingen, Jean},
   title={Density of smooth maps for fractional Sobolev spaces $W^{s, p}$ into $\ell$ simply connected manifolds when $s \ge 1$},
   journal={Confluentes Math.},
   volume={5},
   date={2013},
   pages={3--22},
}

\bib{Bousquet-Ponce-VanSchaftingen}{article}{
   author={Bousquet, Pierre},
   author={Ponce, Augusto C.},
   author={Van Schaftingen, Jean},
   title={Strong density for higher order Sobolev spaces into compact manifolds},
    status={submitted paper},
}

\bib{Brezis-Mironescu-2001}{article}{
    author={Brezis, Ha{\"{\i}}m},
    author={Mironescu, Petru},
    title={Gagliardo-Nirenberg, composition and products in fractional
    Sobolev spaces},
    journal={J. Evol. Equ.},
    volume={1},
    date={2001},
% %   number={4},
    pages={387--404},
% %   issn={1424-3199},
% %   review={\MR{1877265 (2002k:46073)}},
% %   doi={10.1007/PL00001378},
}

\bib{Brezis-Nirenberg_1995}{article}{
   author={Brezis, Ha{\"{\i}}m},
   author={Nirenberg, Louis},
   title={Degree theory and BMO. I. Compact manifolds without boundaries},
   journal={Selecta Math. (N.S.)},
   volume={1},
   date={1995},
%   number={2},
   pages={197--263},
%   issn={1022-1824},
%   review={\MR{1354598 (96g:58023)}},
%   doi={10.1007/BF01671566},
}

\bib{Bourdaud-1995}{article}{
   author={Bourdaud, G.},
   title={Ondelettes et espaces de Besov},
   journal={Rev. Mat. Iberoamericana},
   volume={11},
   date={1995},
   pages={477--511},
}

\bib{Escobedo}{article}{
   author={Escobedo, Miguel},
   title={Some remarks on the density of regular mappings in Sobolev classes
   of $S^M$-valued functions},
   journal={Rev. Mat. Univ. Complut. Madrid},
   volume={1},
   date={1988},
%   number={1-3},
   pages={127--144},
%   issn={0214-3577},
%    review={\MR{977045 (90i:46063)}},
}

\bib{Gagliardo}{article}{
   author={Gagliardo, Emilio},
   title={Caratterizzazioni delle tracce sulla frontiera relative ad alcune
   classi di funzioni in $n$ variabili},
%   language={Italian},
   journal={Rend. Sem. Mat. Univ. Padova},
   volume={27},
   date={1957},
   pages={284--305},
%   issn={0041-8994},
%   review={\MR{0102739 (21 \#1525)}},
}

\bib{Hajlasz}{article}{
  author={Haj{\l}asz, Piotr},
  title={Approximation of Sobolev mappings},
  journal={Nonlinear Anal.},
  volume={22},
  date={1994},
%  number={12},
  pages={1579--1591},
%  issn={0362-546X},
%  review={\MR{1285094 (96a:58030)}},
%  doi={10.1016/0362-546X(94)90190-2},
}

\bib{Hang-Lin}{article}{
   author={Hang, Fengbo},
   author={Lin, Fanghua},
   title={Topology of Sobolev mappings. II},
   journal={Acta Math.},
   volume={191},
   date={2003},
%   number={1},
   pages={55--107},
}

\bib{Hardt-Lin-1987}{article}{
   author={Hardt, Robert},
   author={Lin, Fang-Hua},
   title={Mappings minimizing the $L^p$ norm of the gradient},
   journal={Comm. Pure Appl. Math.},
   volume={40},
   date={1987},
   %number={5},
   pages={555--588},
   %issn={0010-3640},
   %review={\MR{896767 (88k:58026)}},
   %doi={10.1002/cpa.3160400503},
}

\bib{Hatcher}{book}{
   author={Hatcher, Allen},
   title={Algebraic topology},
   publisher={Cambridge University Press},
   place={Cambridge},
   date={2002},
%   pages={xii+544},
%   isbn={0-521-79160-X},
%   isbn={0-521-79540-0},
%   review={\MR{1867354 (2002k:55001)}},
}

\bib{Hirsch}{book}{
   author={Hirsch, Morris W.},
   title={Differential topology},
   series={Graduate Texts in Mathematics},
   volume={33},
%   note={Corrected reprint of the 1976 original},
   publisher={Springer-Verlag},
   place={New York},
   date={1994},
%   pages={x+222},
%   isbn={0-387-90148-5},
%   review={\MR{1336822 (96c:57001)}},
}

\bib{MarcusMizel1979}{article}{
   author={Marcus, Moshe},
   author={Mizel, Victor J.},
   title={Every superposition operator mapping one Sobolev space into
   another is continuous},
   journal={J. Funct. Anal.},
   volume={33},
   date={1979},
%   number={2},
   pages={217--229},
   issn={0022-1236},
}

\bib{Mazya-Shaposhnikova}{article}{
   author={Maz{\cprime}ya, Vladimir},
   author={Shaposhnikova, Tatyana},
   title={An elementary proof of the Brezis and Mironescu theorem on the
   composition operator in fractional Sobolev spaces},
   journal={J. Evol. Equ.},
   volume={2},
   date={2002},
%   number={1},
   pages={113--125},
%   issn={1424-3199},
%   review={\MR{1890884 (2003a:47121)}},
%   doi={10.1007/s00028-002-8082-1},
}

\bib{Mazya-Shaposhnikova-2002}{article}{
   author={Maz{\cprime}ya, Vladimir},
   author={Shaposhnikova, Tatyana},
   title={On the Brezis and Mironescu conjecture concerning a
   Gagliardo-Nirenberg inequality for fractional Sobolev norms},
%   language={English, with English and French summaries},
   journal={J. Math. Pures Appl. (9)},
   volume={81},
   date={2002},
%   number={9},
   pages={877--884},
%   issn={0021-7824},
%   review={\MR{1940371 (2003j:46052)}},
%   doi={10.1016/S0021-7824(02)01262-X},
}

\bib{Mironescu_2004}{article}{
   author={Mironescu, Petru},
   title={On some properties of $S^1$-valued fractional Sobolev spaces},
   conference={
      title={Noncompact problems at the intersection of geometry, analysis,
      and topology},
   },
   book={
      series={Contemp. Math.},
      volume={350},
      publisher={Amer. Math. Soc.},
      place={Providence, RI},
   },
   date={2004},
   pages={201--207},
%   review={\MR{2082399 (2005h:46054)}},
%   doi={10.1090/conm/350/06346},
}

\bib{Mironescu}{article}{
   author={Mironescu, Petru},
   title={Sobolev maps on manifolds: degree, approximation, lifting},
   conference={
      title={Perspectives in nonlinear partial differential equations},
   },
   book={
      series={Contemp. Math.},
      volume={446},
      publisher={Amer. Math. Soc.},
      place={Providence, RI},
   },
   date={2007},
   pages={413--436},
%   review={\MR{2376670 (2008k:46103)}},
%   doi={10.1090/conm/446/08642},
}

\bib{Mucci}{article}{
   author={Mucci, Domenico},
   title={Strong density results in trace spaces of maps between manifolds},
   journal={Manuscripta Math.},
   volume={128},
   date={2009},
%   number={4},
   pages={421--441},
%   issn={0025-2611},
%   review={\MR{2487434 (2010a:58013)}},
%   doi={10.1007/s00229-008-0234-3},
}

\bib{Riviere}{article}{
   author={Rivi{\`e}re, Tristan},
   title={Dense subsets of $H^{1/2}(S^2,S^1)$},
   journal={Ann. Global Anal. Geom.},
   volume={18},
   date={2000},
   %number={5},
   pages={517--528},
   %issn={0232-704X},
   %review={\MR{1790711 (2002e:58030)}},
   %doi={10.1023/A:1006655723537},
}

\bib{Schoen-Uhlenbeck}{article}{
   author={Schoen, Richard},
   author={Uhlenbeck, Karen},
   title={Boundary regularity and the Dirichlet problem for harmonic maps},
   journal={J. Differential Geom.},
   volume={18},
   date={1983},
%   number={2},
   pages={253--268},
}

\bib{Vick}{book}{
   author={Vick, James W.},
   title={Homology theory},
   series={Graduate Texts in Mathematics},
   volume={145},
   edition={2},
   subtitle={An introduction to algebraic topology},
   publisher={Springer-Verlag},
   place={New York},
   date={1994},
%   pages={xiv+242},
%   isbn={0-387-94126-6},
%   review={\MR{1254439 (94i:55002)}},
%   doi={10.1007/978-1-4612-0881-5},
}

\end{biblist}
 
\end{bibdiv}

\end{document}